\newcommand{\AFF}{\operatorname{aff}}
\newcommand{\BD}{\operatorname{bd}}
\newcommand{\BEXP}{\operatorname{b-exp}}
\newcommand{\BH}{\operatorname{bh}_1}
\newcommand{\CL}{\operatorname{cl}}
\newcommand{\CONV}{\operatorname{conv}}
\newcommand{\DIAM}{\operatorname{diam}}
\newcommand{\DIM}{\operatorname{dim}}
\newcommand{\DIST}{\operatorname{dist}}
\newcommand{\INT}{\operatorname{int}}
\newcommand{\RAD}{\operatorname{rad}}
\newcommand{\cK}{{\mathcal K}}
\newcommand{\bR}{{\mathbb{R}}}
\theoremstyle{plain}
\newtheorem{corollary}{Corollary}
\newtheorem{lemma}{Lemma}
\newtheorem{proposition}{Proposition}
\newtheorem{theorem}{Theorem}
\theoremstyle{definition}
\newtheorem{example}{Example}
\newtheorem{remark}{Remark}
\begin{document}

%%%%%%%%%%%%%%%%%%%%%%%%%%%%%%%%%%%%%%%%%%%%%%%%%%

\title{BALL CONVEX BODIES IN MINKOWSKI SPACES}

\author{Thomas Jahn}
\address{Faculty of Mathematics, University of Technology, 09107 Chemnitz, GERMANY}
\email{thomas.jahn@mathematik.tu-chemnitz.de}

\author{Horst Martini}
\address{Faculty of Mathematics, University of Technology, 09107 Chemnitz, GERMANY}
\email{martini@mathematik.tu-chemnitz.de}

\author{Christian Richter}
\address{Institute of Mathematics, Friedrich Schiller University, 07737 Jena, GERMANY}
\email{christian.richter@uni-jena.de}

\dedicatory{To our teachers, colleagues and friends Prof.\ Dr.\ Johannes B\"ohm, on the occasion of his 90th birthday, and Prof.\ Dr.\ Eike Hertel, on the occasion of his 75th birthday.}

\date{\today}

\begin{abstract}
The notion of ball convexity, considered in finite dimensional real Banach spaces, is a natural and useful extension of usual convexity; one replaces intersections of half-spaces by suitable intersections of balls. A subset $S$ of a normed space is called ball convex if it coincides with its ball hull, which is obtained as intersection of all balls (of fixed radius) containing $S$. Ball convex sets are closely related to notions like ball polytopes, complete sets, bodies of constant width, and spindle convexity. We will study geometric properties of ball convex bodies in normed spaces, for example deriving separation theorems, characterizations of strictly convex norms, and an application to complete sets. Our main results refer to minimal representations of ball convex bodies in terms of their ball exposed faces, to representations of ball hulls of sets via unions of ball hulls of finite subsets, and to ball convexity of increasing unions of ball convex bodies.
\end{abstract}

\subjclass[2010]{46B20, 52A01, 52A20, 52A21, 52A35}
\keywords{Ball convex body, ball hull, ball polytope, b-exposed point, b-face, Carath\'eodory's theorem, circumball, complete set, exposed b-face, Minkowski space, normed space, separation theorems, spindle convexity, strictly convex norm, supporting sphere.}

\maketitle

%%%%%%%%%%%%%%%%%%%%%%%%%%%%%%%%%%%%%%%%%%%%%%%%%%%%%%%%%%%%%%%%%%%%%

\section{Introduction}

It is well known that generalized convexity notions are helpful for solving various (metrical) problems from non-Euclidean geometries in an elegant way. For example, Menger's notion of $d$-segments, yielding that of $d$-convex sets
(see Chapter II of \cite{Bo-Ma-So}), is a useful tool for solving location problems in finite dimensional real  Banach spaces (cf.\ \cite{Ma-Sw-We}). Another example, also referring to normed spaces, is the notion of ball convexity: usual convexity is extended by considering suitably defined intersections of balls instead of intersections of half-spaces. The ball hull of a given point set $S$ is the intersection of all balls (of fixed radius)
which contain $S$, and $S$ is called ball convex if it coincides with its ball hull. Ball convex sets are strongly related to notions from several recent research topics, such as ball polytopes, applications of spindle convexity, bodies of constant width, and diametrically maximal (or complete) sets. In the present article we study geometric properties and (minimal) representations of ball convex bodies in normed spaces. In terms of ball convexity and related notions, we derive separation properties of ball convex bodies, various characterizations of strictly convex norms, and an application for diametrically maximal sets, which answers a question from \cite{martini_et_al_2014}. Introducing suitable notions describing the boundary structure of ball convex bodies, our main results refer to minimal representations of ball convex bodies, particularly in terms of their ball exposed faces. More precisely, we extend the formula $K = {\rm cl}({\rm conv}
({\rm exp}(K)))$ from classical convexity (where $K$ is a convex body in $\bR^n$) to the concept of ball convexity in normed spaces. On the other hand, we derive theorems on the representation of ball convex bodies ``from inside''. That is, we show that unions of increasing sequences of ball convex bodies are, essentially, ball convex, and we present ball hulls of sets by unions of ball hulls of finite subsets. In that context we solve a problem from \cite{La-Na-Ta}.
We finish with some open questions inspired by the notions of ball hull and ball convexity; they refer to spindle convex sets and generalized Minkowski spaces (whose unit balls need not be centered at the origin).

%%%%%%%%%%%%%%%%%%%%%%%%%%%%%%%%%%%%%%%%%%%%%%%%%%%%%%%%%%%%%%%%%%%%%%%%%%%

We will give now a brief survey on what has been done regarding ball convexity and related notions. Intersections of finitely many congruent Euclidean balls were studied in \cite{Bie1} and \cite{Bie2}, in
three dimensions by \cite{Hepp, He-Re, Str, Grue}; see also \cite{Mar-Swa}.
The notions of ball hull and ball convexity have been considered by various authors,
defining them via intersections of balls of some fixed radius $R > 0$ and calling this concept also $R$-convexity; see, e.g., \cite{B-C-C, Bezd-Na, kupitz_et_al_2010, La-Na-Ta}. In view of this concept bodies of constant width (see \cite{Mon}), Minkowski sums, Hausdorff limits, and approximation properties of $R$-convex sets (cf.\ \cite{Pol1,Pol2,Po-Ba}) are investigated. Also analogues of the Krein--Milman theorem and of Carath\'eodory's theorem
(see \cite{Pol1,Pol3}) are considered, but only for the Euclidean norm. Not much has been done for normed spaces; however, for related results we refer to \cite{Bal} for Hilbert spaces and to \cite{Ba-Po,Ali,Ba-Iv 1,Mart-Spir} for normed spaces. A recent contribution is \cite{La-Na-Ta},
referring, e.g., to the Banach--Mazur distance and Hadwiger illumination numbers of sets being ball convex in the sense described here.

Closely related is the concept of ball polytopes. It was investigated in \cite{bezdek_et_al_2007,kupitz_et_al_2010,Pa} (but see also \cite{Pol1}, \cite[Chapter 6]{BezdekK}, and \cite[Chapter 5]{Bezdek}). The
boundary structure of ball polytopes
is interesting (digonal facets can occur, and hence their edge-graphs are different from usual polyhedral edge-graphs), their properties are also useful for constructing
bodies of constant width, and analogues of classical theorems like those  of Carath\'eodory and Steinitz on linear convex hulls are proved in these papers.

The study of the related notion of spindle convexity (also called hyperconvexity or $K$-convexity) was initiated by Mayer \cite{May}; see also \cite{meissner1911} and, for Minkowski spaces,
\cite[p. 99]{Val}. The definition is given in \S~8 below. For a discussion of this notion we refer to the survey
\cite[p.\ 160]{Da-Gr-Kl} and, for further results and references in the spirit of abstract convexity and combinatorial geometry, to the papers \cite{bezdek_et_al_2007,La-Na-Ta,Pa,Bezd, FV} and \cite[Chapters 5 and 6]{Bezdek}. In \cite{BN} this notion was extended to analogues of starshaped sets.

To avoid confusion, we shortly mention another concept which is also called ball convexity. Namely, in \cite{La 1} a set is called ball convex if, with any finite number of points, it contains the intersection of all balls (of arbitrary radii) containing the points. The ball convex hull of a set $S$ is again defined as the intersection of all ball convex sets containing $S$. In \cite{La 1,La 2} this notion was investigated for normed spaces, and in
\cite{La 3} the relations of these notions to metric or $d$-convexity are investigated.
The ball hull mapping studied for Banach spaces in \cite{Mo-Sch3,Mo-Sch4} is also related.

%%%%%%%%%%%%%%%%%%%%%%%%%%%%%%%%%%%%%%%%%%%%%%%%%%%%%%%%%%%%%%%%%%

\section{Definitions and notations\label{sec_definitions}}

Let $\mathcal{K}^n= \{S \subseteq \bR^n: S$ is compact, convex, and non-empty$\}$ be the set of all \emph{convex bodies} in $\bR^n$ (thus, in our terminology, a convex body need not have interior points). Let $B \in \mathcal{K}^n$ be centered at the origin $o$ of $\bR^n$ and have non-empty interior. We denote by $(\bR^n, \| \cdot \|)$ the $n$-dimensional \emph{normed} or
\emph{Minkowski space} with unit ball $B$, i.e., the $n$-dimensional real Banach space whose \emph{norm} is given by $\| x \| = \min \{\lambda \ge 0 : x \in \lambda B\}$. Any homothetical copy $B(x,r), x \in \bR^n$ and
$r \ge 0$, of $B$ is a \emph{closed ball} of $(\bR^n, \| \cdot \|)$ with center $x$ and radius $r$; therefore we  replace $B$ by writing from now on $B(o,1)$ for
the \emph{unit ball of} $(\bR^n, \| \cdot \|$). The boundary of the ball $B(x,r)$ is the \emph{sphere} $S(x,r)$, and
therefore $S(o,1)$ denotes the
\emph{unit sphere} of our Minkowski space. Note that we will use the symbol $S$ for an arbitrarily given point set in $\bR^n$. For a compact $S$, we write dist$(x,S) = \min \{\|x-y\| : y \in S\}$ for the
\emph{distance of $x$ and} $S$, and we denote by rad$(S)$ the \emph{circumradius} of $S$, i.e., the radius of any \emph{circumball} (or minimal enclosing ball) of $S$, whose existence is assured by the boundedness of $S$. The
\emph{diameter} of $S$ is given by ${\rm diam}(S) = \max \{\|x-y\| : x,y \in S\}$. The triangle inequality yields the left-hand side of
\begin{equation}\label{(1a)}
\frac{1}{2} {\rm diam} (S) \le {\rm rad} (S) \le \frac{n}{n+1} {\rm diam} (S)\,,
\end{equation}
and we refer to \cite[Theorem 6]{bohnenblust} for the right-hand side.

As usual, we use the abbreviations int$(S)$, cl$(S)$, bd$(S)$, conv$(S)$, and aff$(S)$ for \emph{interior, closure, boundary, convex hull}, and \emph{affine hull} of $S$, respectively. We write $[x_1,x_2]$ for the
\emph{closed segment} with endpoints $x_1, x_2 \in \bR^n$, and $(a,b)$, $(a,b]$, $[a,b]$ are analogously used for the \emph{open}, \emph{half-open} or \emph{closed interval} with $a,b \in \bR$, respectively. We use $|\cdot|$ for the
\emph{cardinality} of a set.

A convex body is called \emph{strictly convex} if its boundary does not contain proper segments; analogously, $\|\cdot\|$ is called a \emph{strictly convex norm} if the respective unit ball is strictly convex.

Since we want to derive results for generalized convexity notions, the following definitions yield direct analogues of notions from classical convexity; see \cite{schneider1993}.
The first of them is an analogue of the (closed) convex hull.
Namely, the \emph{ball hull} of a set $S$ is defined by
$$
\BH(S) = \bigcap_{S \subseteq B(x,1)} B(x,1) \,.
$$
A formally clearer expression would be $\BH(S)=\bigcap_{x \in \bR^n:\,S \subseteq B(x,1)} B(x,1)$, but we assume that the above shorter notation, as well as similar ones in the sequel, will not cause confusion. (We underline once more that here and below we use balls of radius $1$.) A \emph{ball convex} (\emph{b-convex}) \emph{set} $S$ is characterized by $S = {\rm bh}_1(S)$ or, equivalently, by the property that $S$ is an intersection of closed balls of radius $1$ (then $S$ is necessarily closed and convex). A \emph{b-convex body} $K$ is a bounded non-empty b-convex set (the analogue of a convex body in classical convexity); $\emptyset$ and $\bR^n$ are
the only b-convex sets that are not b-convex bodies. (Note that $\mathbb{R}^n$ is $b$-convex, since we want to understand the intersection of an empty family of sets as $\mathbb{R}^n$.)
A \emph{supporting sphere} $S(x,1)$ of $K$ is characterized by $K \subseteq B(x,1)$ and $K \cap S(x,1) \not= \emptyset$; the corresponding
\emph{exposed b-face} (or \emph{b-support set}) is $K \cap S(x,1)$ (note that non-empty \emph{facets} from \cite[Definition 5.3]{kupitz_et_al_2010}
 are a special case).

If an exposed b-face is a singleton $\{x_0\}$, then $x_0$ is called a \emph{b-exposed point of } $K$, and b-exp$(K)$ denotes the set of all b-exposed points. We note that several such concepts, referring to the
analogous notions for ball polytopes, their boundary structure, separation properties with respect to spheres etc., can be found in the papers \cite{Bezd, kupitz_et_al_2010}, but are
defined there only for the subcase of the Euclidean norm.
Finally, a set $S$ is called \emph{b-bounded} if $\RAD(S)< 1$. This means that $S$ is inside a ball of radius $1$ and separated from its bounding sphere, which plays the role of a hyperplane in classical convexity.

We close this section by summarizing several basic facts about ball hulls and circumradii, and we give a lemma on intersections of compact sets with the boundaries of their circumballs.

\begin{lemma}
\label{lem_basics}
Let $(\mathbb{R}^n,\|\cdot\|)$ be a Minkowski space. The following are satisfied for all $S,T \subseteq \mathbb{R}^n$ and $x \in \mathbb{R}^n$:
\begin{enumerate}[label={(\alph*)}]
\item $S \subseteq \CL(S) \subseteq \CL(\CONV(S)) \subseteq \BH(S)= \BH(\CL(S))= \BH(\CONV(S))= \BH(\BH(S))$.\label{bh_bhbh}
\item If $S \subseteq T$, then $\BH(S) \subseteq \BH(T)$.\label{bh_inclusion}
\item $B(x,r)$ is a b-convex body for every $r \in [0,1]$.\label{ball_is_body}
\item If $\RAD(S) \le 1$, then $\RAD(\BH(S))=\RAD(S)$.
In particular, $\BH(S)$ is b-bounded if $S$ is b-bounded.\label{rad_bh}
\item If $S$ is closed and $S \subseteq \INT(B(x,r))$ for some $r > 0$, then $S \subseteq B(x,r')$ for some $r' \in (0,r)$ and $\RAD(S) < r$. In particular, a closed subset of $\mathbb{R}^n$ is b-bounded if and only if it is covered by an open ball of radius $1$.\label{rad_interior}
\end{enumerate}
\end{lemma}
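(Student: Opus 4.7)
Parts (a) and (b) are essentially bookkeeping. For (b), if $S\subseteq T$ then every unit ball containing $T$ also contains $S$, so the family of unit balls intersected to form $\BH(T)$ is a subfamily of that used for $\BH(S)$, reversing the inclusion. For (a), the chain $S\subseteq\CL(S)\subseteq\CL(\CONV(S))$ is standard, and $\CL(\CONV(S))\subseteq\BH(S)$ follows because every unit ball is closed and convex, hence contains $\CL(\CONV(S))$ whenever it contains $S$. The four $\BH$-equalities all rest on a single observation: each of $\CL(S)$, $\CONV(S)$, and $\BH(S)$ lies between $S$ and $\BH(S)$, so a unit ball contains any one of them if and only if it contains $S$, making the defining intersecting families coincide.

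The crux of the lemma is (c), and here one has to do actual work with the geometry of unit balls. The inclusion $B(x,r)\subseteq\BH(B(x,r))$ is trivial. For the reverse, given $z\notin B(x,r)$, I plan to exhibit a single unit ball containing $B(x,r)$ but missing $z$. Setting $y:=x-\tfrac{1-r}{\|z-x\|}(z-x)$, one has $\|y-x\|=1-r$, so the triangle inequality gives $B(x,r)\subseteq B(y,1)$. At the same time, $y-x$ is a negative scalar multiple of $z-x$, which turns the triangle inequality into the equality $\|z-y\|=\|z-x\|+(1-r)>r+(1-r)=1$, hence $z\notin B(y,1)$. I expect this construction to be the main obstacle; note that the formula works uniformly for all $r\in[0,1]$, collapsing correctly to $y=x$ at $r=1$.

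Parts (d) and (e) are short corollaries of (c). For (d), $S\subseteq\BH(S)$ yields $\RAD(S)\le\RAD(\BH(S))$ at once; for the reverse, a circumball $B(c,\RAD(S))$ of $S$ has radius $\le 1$, so by (c) it is b-convex, and monotonicity (b) gives $\BH(S)\subseteq B(c,\RAD(S))$, hence $\RAD(\BH(S))\le\RAD(S)$. The b-boundedness assertion follows immediately.

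For (e), the containment $S\subseteq\INT(B(x,r))$ makes $S$ bounded, so being closed it is compact; the continuous map $y\mapsto\|y-x\|$ attains its maximum at some $y_0\in S\subseteq\INT(B(x,r))$, so this maximum is strictly less than $r$ and can be taken as the desired $r'$. The characterization of b-boundedness for closed sets is then the case $r=1$ of the first half in one direction, and in the converse direction $\RAD(S)<1$ forces a circumball of $S$ to lie inside $\INT(B(c,1))$.
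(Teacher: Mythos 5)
Your proof is correct and follows essentially the same route as the paper's: (a), (b), (d) are identical in substance, your separation argument for (c) is just a pointwise unpacking of the paper's formula $B(x,r)=\bigcap_{\|y-x\|\le 1-r}B(y,1)$ (same triangle-inequality computation), and your compactness argument for (e) (maximizing $\|y-x\|$ over $S$ rather than minimizing the distance to $S(x,r)$, as the paper does) is an equivalent variant. The only cosmetic issue is the degenerate case in (e) where $S$ is a singleton or empty, in which your $r'$ would be $0$ and must be nudged into $(0,r)$; this is trivial.
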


\begin{proof}
Parts \ref{bh_bhbh} and \ref{bh_inclusion} are obvious; see \cite[Lemma~1]{martini_et_al_2013} for a collection of related statements.

For \ref{ball_is_body}, the triangle inequality gives the following representation of $B(x,r)$ as an intersection of balls of radius $1$: $B(x,r)= \bigcap_{\|y-x\| \le 1-r} B(y,1)$.

To see \ref{rad_bh}, first note that $\RAD(S) \le \RAD(\BH(S))$ by \ref{bh_bhbh}. If $B(x,\RAD(S))$ is a circumball of $S$, then $\BH(S) \subseteq \BH(B(x,\RAD(S))) = B(x,\RAD(S))$ by \ref{bh_inclusion} and \ref{ball_is_body}. Hence
$\RAD(\BH(S)) \le \RAD(B(x,\RAD(S)))=\RAD(S)$.

For \ref{rad_interior}, suppose that $S$ contains at least two points. Consider the continuous function $f: S \rightarrow \mathbb{R}$, $f(y)=\DIST(y,S(x,r))=\DIST(y,\mathbb{R}^n \setminus B(x,r))$. Since $S$ is compact, $f$ attains its minimum: $f(y) \ge f(y_0) \in (0,r)$ for all $y \in S$. This shows that $\DIST(y,\mathbb{R}^n \setminus B(x,r)) \ge f(y_0)$ for all $y \in S$; i.e., $S \subseteq B(x,r')$, where $r'=r-f(y_0) \in (0,r)$.
\end{proof}

\begin{lemma}
\label{lem_circumintersection}
Let $B(x_0,\RAD(S))$ be a circumball of a non-empty compact subset $S$ of a Minkowski space $(\mathbb{R}^n,\|\cdot\|)$. Then $\RAD(S\cap S(x_0,\RAD(S)))= \RAD(S)$. In particular, there exist $x,x' \in
S\cap S(x_0,\RAD(S))$ such that $\|x-x'\| \ge \frac{n+1}{n} \RAD(S)$.
\end{lemma}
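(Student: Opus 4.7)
The plan is to reduce the lemma to proving the identity $\RAD(T)=\RAD(S)$ for $T:=S\cap S(x_0,\RAD(S))$; the ``in particular'' clause then drops out by applying the right-hand inequality in \eqref{(1a)} to the set $T$. Indeed, $T$ is compact (it is the intersection of compact $S$ with the closed sphere $S(x_0,\RAD(S))$), so if $\RAD(T)=\RAD(S)$ then \eqref{(1a)} gives $\DIAM(T)\ge\frac{n+1}{n}\RAD(S)$, and $\DIAM(T)$ is attained at some pair $x,x'\in T$, which is exactly what is claimed.

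Set $R:=\RAD(S)$. I would first observe that $T\ne\emptyset$: otherwise $S\subseteq\INT(B(x_0,R))$, and Lemma~\ref{lem_basics}\ref{rad_interior} would force $\RAD(S)<R$, contradicting the choice of $x_0$. The inequality $\RAD(T)\le R$ is automatic from $T\subseteq B(x_0,R)$, so the real content is the reverse inequality.

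For $\RAD(T)\ge R$ I would argue by contradiction. Assume $T\subseteq B(y,r)$ with $r<R$ and put $\delta:=R-r>0$. The key idea is to push the center slightly from $x_0$ toward $y$: with $x_t:=(1-t)x_0+ty$, I want to show that for some small $t>0$ the containment $S\subseteq\INT(B(x_t,R))$ holds, which via Lemma~\ref{lem_basics}\ref{rad_interior} contradicts $\RAD(S)=R$. To that end I would partition $S$ by the open set $U:=\{z\in S:\|z-y\|<R-\delta/2\}\supseteq T$. For $z\in U$, the triangle inequality gives
\[
\|z-x_t\|\le(1-t)\|z-x_0\|+t\|z-y\|\le(1-t)R+t(R-\delta/2)=R-t\delta/2.
\]
For $z\in S\setminus U$, which is compact and disjoint from $T$, every point satisfies $\|z-x_0\|<R$ (since $z\in S\subseteq B(x_0,R)$ but $z\notin S(x_0,R)$), so by compactness $R':=\max_{z\in S\setminus U}\|z-x_0\|<R$, and then $\|z-x_t\|\le R'+t\|x_0-y\|$ is also forced below $R$ by taking $t$ small.

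The main obstacle is reconciling these two regimes—the convex-combination estimate valid near $T$ and the compactness estimate valid far from $T$—into a single threshold on $t$ that makes both bounds strictly less than $R$ at once. Two degenerate configurations need to be dismissed in advance: if $S\setminus U=\emptyset$, then $S\subseteq B(y,R-\delta/2)$ already contradicts $\RAD(S)=R$; and $y=x_0$ cannot occur, since then $T\subseteq B(x_0,r)\cap S(x_0,R)=\emptyset$ would contradict $T\ne\emptyset$. With these cases out of the way, the threshold $0<t<(R-R')/\|x_0-y\|$ is legitimate, the perturbation yields the contradiction, and the ``in particular'' clause follows from \eqref{(1a)} as explained above.
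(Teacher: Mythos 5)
Your proof is correct, and it takes a genuinely different route from the paper's. The paper also argues by contradiction from $T \subseteq \INT(B(x_1,1))$ (after normalizing $x_0=o$, $\RAD(S)=1$), but in the opposite direction: since $\RAD(S)=1$, no ball $B(\frac{1}{i}x_1,1)$ can interior-cover $S$, so Lemma~\ref{lem_basics}\ref{rad_interior} yields witnesses $y_i \in S \setminus \INT(B(\frac{1}{i}x_1,1))$; the estimate $\|y_i-x_1\| \ge i\|y_i-\frac{1}{i}x_1\|-(i-1)\|y_i\| \ge 1$ shows all witnesses avoid $\INT(B(x_1,1))$ uniformly, and an accumulation point of $(y_i)$ is then found to lie in $S \cap S(o,1)$ yet outside $\INT(B(x_1,1))$, contradicting the covering of $T$. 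You instead show that a \emph{single} slightly perturbed ball $B(x_t,R)$ interior-covers all of $S$, by splitting $S$ into the relatively open neighbourhood $U$ of $T$ (handled by the convex-combination estimate, valid for $t \in (0,1]$) and the compact remainder $S\setminus U$ (handled by the uniform gap $R'<R$), and then invoke Lemma~\ref{lem_basics}\ref{rad_interior} once at the end to contradict $\RAD(S)=R$. Your route replaces the sequence/accumulation-point argument and the algebraic trick with a two-regime bookkeeping whose degenerate cases ($S\setminus U=\emptyset$ and $y=x_0$) you dispatch correctly; it is arguably more quantitative, since it exhibits an explicit admissible threshold for $t$. The reduction of the ``in particular'' clause to \eqref{(1a)} and compactness is the same in both proofs.
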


\begin{proof}
Without loss of generality, we set $B(x_0,\RAD(S))=B(o,1)$. Assume that, contrary to our claim, $\RAD(S \cap S(o,1))<1$. Then there exists $x_1 \in \mathbb{R}^n$ such that
\begin{equation}
\label{eq_ci1}
S \cap S(o,1) \subseteq \INT(B(x_1,1)).
\end{equation}
Since $\RAD(S)=1$, Lemma~\ref{lem_basics}\ref{rad_interior} gives points
\begin{equation}
\label{eq_ci2}
y_i \in S \setminus {\rm int}\left(B\left(\frac{1}{i}x_1,1\right)\right), \quad i =1,2,\ldots
\end{equation}
Note that
$$
(y_i)_{i=1}^\infty \subseteq S \setminus \INT(B(x_1,1)),
$$
because $y_i \in S \setminus \INT\left(B\left(\frac{1}{i}x_1,1\right)\right) \subseteq B(o,1) \setminus \INT\left(B\left(\frac{1}{i}x_1,1\right)\right)$ gives $\|y_i\| \le 1$, $\left\|y_i-\frac{1}{i}x_1\right\| \ge 1$,
and in turn
\begin{eqnarray*}
\|y_i-x_1\| &=&\left\|i\left(y_i-\frac{1}{i}x_1\right)-(i-1)y_i\right\|\\
&\ge& i\left\|y_i-\frac{1}{i}x_1\right\|-(i-1)\|y_i\|\\
&\ge& i-(i-1)\\
&=&1.
\end{eqnarray*}
Since $S \setminus \INT(B(x_1,1))$ is compact, $\left( y_i \right)_{i=1}^\infty$ has an accumulation point $y_0 \in S \setminus \INT(B(x_1,1))$. We know that $\|y_0\| \le 1$ from $S \subseteq B(o,1)$, whereas \eqref{eq_ci2} gives $\left\|y_i-\frac{1}{i}x_1\right\| \ge 1$ and, by $i \to \infty$, $\|y_0\| \ge 1$. This way we see that
$$
y_0 \in S \cap S(o,1) \setminus \INT(B(x_1,1)),
$$
which contradicts \eqref{eq_ci1} and completes the proof of $\RAD(S\cap S(x_0,\RAD(S)))=\RAD(S)$.

Now the additionally claimed existence of $x,x' \in S \cap S(x_0,\RAD(S))$ such that $\|x-x'\| \ge \frac{n+1}{n}\RAD(S)$ is a consequence of the right-hand estimate in (\ref{(1a)})
 and the compactness of $S$.
\end{proof}

%%%%%%%%%%%%%%%%%%%%%%%%%%%%%%%%%%%%%%%%%%%%%%%%%%%%%%%%%%%%%%%%%%%%%%%%%%%

\section{Separation properties}

The following results on the separation of b-convex bodies and points by spheres are analogues of theorems on the separation by hyperplanes in classical convexity.

\begin{proposition}
\label{prop_separation}
Let $K$ be a b-convex body in a Minkowski space $(\mathbb{R}^n,\|\cdot\|)$.
\begin{enumerate}[label=(\alph*)]
\item For every $x_0 \in \BD(K)$, there exists a supporting sphere
$S(y_0,1)$ of $K$ such that $x_0 \in S(y_0,1)$.\label{sep_a}
\item For every $x_0 \in \mathbb{R}^n \setminus K$, there exists a supporting sphere $S(y_0,1)$ of $K$ such that $x_0 \notin B(y_0,1)$.\label{sep_b}
\item If $K$ is b-bounded then, for every $x_0 \in \mathbb{R}^n \setminus K$, there exists a sphere of unit radius $S(y_0,1)$ such that $K \subseteq \INT(B(y_0,1))$ and $x_0 \notin B(y_0,1)$. In particular, $K \subseteq B(y_0,r)$ for some $r \in (0,1)$.\label{sep_c}
\end{enumerate}
\end{proposition}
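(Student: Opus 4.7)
My plan is to prove part~\ref{sep_b} first, then derive \ref{sep_a} from it by a limit argument, and finally obtain \ref{sep_c} from \ref{sep_b} together with the b-boundedness hypothesis.

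For \ref{sep_b}, the identity $K=\BH(K)$ together with $x_0\notin K$ immediately supplies some $y_0$ with $K\subseteq B(y_0,1)$ and $x_0\notin B(y_0,1)$; if already $K\cap S(y_0,1)\ne\emptyset$ there is nothing to do, so I assume $K\subseteq\INT(B(y_0,1))$. I then consider the continuous convex function $g(y):=\max_{w\in K}\|w-y\|$ along the ray $y(s):=y_0+s(y_0-x_0)$, $s\ge 0$. On this ray, $\|x_0-y(s)\|=(1+s)\|x_0-y_0\|>1$ for every $s\ge 0$, whereas $g(y(0))<1$ and $g(y(s))\ge s\|y_0-x_0\|-\max_{w\in K}\|w-y_0\|\to\infty$ as $s\to\infty$. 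The intermediate value theorem yields $s^{*}>0$ with $g(y(s^{*}))=1$, which is precisely the condition that $K\subseteq B(y(s^{*}),1)$ and $K\cap S(y(s^{*}),1)\ne\emptyset$.

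For \ref{sep_a}, pick a sequence $x_k\notin K$ with $x_k\to x_0$, available since $K$ is closed and $x_0\in\BD(K)$. Apply \ref{sep_b} to each $x_k$ to obtain $y_k$ with $K\subseteq B(y_k,1)$, $K\cap S(y_k,1)\ne\emptyset$, and $\|x_k-y_k\|>1$. Fixing any $w_0\in K$, the inclusion $K\subseteq B(y_k,1)$ forces $y_k\in B(w_0,1)$, so $(y_k)$ is bounded and has a convergent subsequence with limit $y_0$. In the limit, $K\subseteq B(y_0,1)$ and $\|x_0-y_0\|\ge 1$, while $x_0\in K$ forces $\|x_0-y_0\|\le 1$. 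Hence $x_0\in K\cap S(y_0,1)$ and $S(y_0,1)$ is a supporting sphere containing $x_0$.

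For \ref{sep_c}, I first apply \ref{sep_b} to extract $y_0$ with the three properties, and then use $K=\BH(K)$ together with Lemma~\ref{lem_basics}\ref{rad_bh} to choose $z\in\bR^n$ and $r<1$ with $K\subseteq B(z,r)$. On the segment $y(s):=(1-s)y_0+sz$, the triangle inequality yields $\max_{w\in K}\|w-y(s)\|\le(1-s)+sr<1$ for every $s\in(0,1]$, so $K\subseteq\INT(B(y(s),1))$. By continuity in $s$ the condition $\|x_0-y(s)\|>1$ persists for small $s>0$, delivering the required sphere; the ``in particular'' clause follows from Lemma~\ref{lem_basics}\ref{rad_interior} applied to this inclusion. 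The main technical care lies in \ref{sep_b}, where I must ensure that sliding the center along the chosen ray actually drives $g$ up to the value $1$; this is clean because $g$ is convex and grows without bound along rays. The remaining steps are routine compactness and continuity arguments built on the defining properties of $\BH$.
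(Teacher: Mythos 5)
Your proof is correct; all three parts go through. The techniques are the ones the paper uses (sliding ball centers along a ray or segment, plus compactness), but the logical organization differs in one genuine way: the paper proves \ref{sep_a} directly, reading off from $x_0\in\BD\bigl(\bigcap_{K\subseteq B(y,1)}B(y,1)\bigr)$ a sequence of centers $y_i$ with $1-\|x_0-y_i\|\to 0$ and passing to a convergent subsequence, whereas you obtain \ref{sep_a} as a corollary of \ref{sep_b} by approximating $x_0\in\BD(K)$ with exterior points $x_k\to x_0$ and letting the separating centers converge. Both are compactness arguments of comparable length; yours buys a cleaner dependency structure (only \ref{sep_b} touches the defining intersection), at the cost of having to observe that $\BD(K)\subseteq\CL(\bR^n\setminus K)$ and that $x_0\in K$ forces $\|x_0-y_0\|\le 1$ in the limit, which you do. For \ref{sep_b} your argument is essentially the paper's: both push the center of a ball containing $K$ along the ray from $x_0$ through that center until the sphere first meets $K$. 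Your formalization via the intermediate value theorem applied to $g(y)=\max_{w\in K}\|w-y\|$ is a small improvement in rigor, since it makes explicit why a last admissible position exists --- the paper simply asserts a maximal $\lambda_0$, tacitly using the continuity (and convexity, to get an interval) that you spell out. For \ref{sep_c} both proofs take a convex combination of the separating center with the center of a ball of radius $<1$ around $K$ and apply the triangle inequality; the paper combines with a center $y_1$ satisfying $K\subseteq\INT(B(y_1,1))$ while you combine with a circumball center, which is an immaterial difference. No gaps.
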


\begin{proof}
For proving \ref{sep_a}, note that the assumption
$$
x_0 \in \BD(K)= \BD(\BH(K))= \BD\left(\bigcap_{K \subseteq B(y,1)} B(y,1)\right)
$$
yields the existence of a sequence $(y_i)_{i=1}^\infty \subseteq \mathbb{R}^n$ such that $K \subseteq B(y_i,1)$ for all $i$ and
$$
0= \lim_{i \to \infty} \DIST(x_0, \mathbb{R}^n \setminus B(y_i,1))= \lim_{i \to \infty} (1-\|x_0-y_i\|).
$$
By compactness, $(y_i)_{i=1}^\infty$ has a convergent subsequence, and we can assume that $\lim_{i \to \infty} y_i= y_0$ without loss of generality. Then the above observations imply $K \subseteq B(y_0,1)$ and $\|x_0-y_0\|=1$, i.e., $x_0 \in S(y_0,1)$. This is our claim.

For \ref{sep_b}, we have $x_0 \notin K= \bigcap_{K \subseteq B(y,1)} B(y,1)$. Hence there is $y_1 \in \mathbb{R}^n$ such that $K \subseteq B(y_1,1)$ and $x_0 \notin B(y_1,1)$. We consider the translated balls $B_\lambda:=B(y_1+\lambda(y_1-x_0),1)$, $\lambda \ge 0$. We know that $K \subseteq B_0$. Let $\lambda_0 \ge 0$ be maximal such that
$$
K \subseteq B_\lambda \quad\mbox{ for }\quad 0 \le \lambda \le \lambda_0.
$$
By the maximality of $\lambda_0$, $\BD(B_{\lambda_0})=S(y_1+\lambda_0(y_1-x_0),1)=:S(y_0,1)$ is a supporting sphere of $K$. Moreover, $x_0 \notin B(y_0,1)$, because
$x_0 \notin B(y_1,1)$ gives
$$
\|x_0-y_0\|=\|x_0-(y_1+\lambda_0(y_1-x_0))\|=(1+\lambda_0)\|x_0-y_1\|> 1+\lambda_0\ge 1.
$$
This proves \ref{sep_b}.

For the proof of \ref{sep_c}, the b-boundedness of $K$ gives $y_1 \in \mathbb{R}^n$ such that $K \subseteq \INT(B(y_1,1))$. By \ref{sep_b}, there is $y_2 \in \mathbb{R}^n$ with $K \subseteq B(y_2,1)$ and $x_0 \notin B(y_2,1)$. We can pick $\varepsilon \in (0,1)$ small enough such that
$$
x_0 \notin B(y_0,1), \quad\mbox{ where }\quad y_0:=y_2+\varepsilon(y_1-y_2).
$$
Then we obtain
\begin{equation}
\label{eq_K_int}
K \subseteq \INT(B(y_0,1)),
\end{equation}
because, for arbitrary $x \in K$, the inclusions $K \subseteq \INT(B(y_1,1))$ and $K \subseteq B(y_2,1)$ imply $\|x-y_1\|<1$, $\|x-y_2\| \le 1$, and in turn
\begin{eqnarray*}
\|x-y_0\|
&=& \|x-(y_2+\varepsilon(y_1-y_2))\| \\
&=& \|\varepsilon(x-y_1)+(1-\varepsilon)(x-y_2)\| \\
&\le& \varepsilon \|x-y_1\|+(1-\varepsilon)\|x-y_2\| \\
&<& \varepsilon+(1-\varepsilon) \\
&=& 1.
\end{eqnarray*}
Finally, \eqref{eq_K_int} yields $K \subseteq B(y_0,r)$ for suitable $r \in (0,1)$ by Lemma~\ref{lem_basics}\ref{rad_interior}.
\end{proof}

\begin{corollary}
Every b-convex body in a Minkowski space $(\mathbb{R}^n,\|\cdot\|)$ satisfies
$$
\BD(K)= \bigcup \{F: F \mbox{ is an exposed b-face of } K\}.
$$
\end{corollary}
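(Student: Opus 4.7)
The plan is to establish the two set inclusions separately; both reduce to short arguments that use what has already been set up in the excerpt.

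For the inclusion $\BD(K) \subseteq \bigcup\{F : F \mbox{ is an exposed b-face of } K\}$, essentially all of the work is already done in Proposition~\ref{prop_separation}\ref{sep_a}: given $x_0 \in \BD(K)$, that proposition produces a supporting sphere $S(y_0,1)$ of $K$ with $x_0 \in S(y_0,1)$, and then $x_0 \in K \cap S(y_0,1)$, which is by definition the exposed b-face associated to this supporting sphere.

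For the reverse inclusion, I would fix an arbitrary exposed b-face $F = K \cap S(y_0,1)$ (so $K \subseteq B(y_0,1)$) and an arbitrary point $x \in F$, and argue that $x \in \BD(K)$. Since $x \in K \subseteq \CL(K)$, it is enough to produce points of $\mathbb{R}^n \setminus K$ arbitrarily close to $x$. The natural candidates are the outward radial points $z_t := y_0 + t(x-y_0)$ for $t > 1$: because $\|x-y_0\|=1$, we have $\|z_t - y_0\| = t > 1$, so $z_t \notin B(y_0,1) \supseteq K$, while $z_t \to x$ as $t \to 1^+$. Here it is important that $x \ne y_0$, which is guaranteed by $\|x-y_0\|=1$.

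I do not anticipate a substantive obstacle: the corollary is really just a repackaging of Proposition~\ref{prop_separation}\ref{sep_a} together with the elementary observation that along the outward radial direction at any point of a supporting sphere one immediately leaves the enclosing ball, and hence leaves $K$.
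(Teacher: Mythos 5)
Your proof is correct and follows the paper's approach: the inclusion $\subseteq$ is exactly Proposition~\ref{prop_separation}\ref{sep_a}, and your radial argument for the reverse inclusion just makes explicit the step the paper dismisses as ``implied by the definition of exposed b-faces.''
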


\begin{proof}
Proposition~\ref{prop_separation}\ref{sep_a} gives ``$\subseteq$''. The converse inclusion is implied by the definition of exposed b-faces.
\end{proof}

Proposition~\ref{prop_separation} gives rise to alternative representations of ball hulls.

\begin{corollary}
\label{cor_hull_alternative1}
Every b-bounded subset $S$ of a Minkowski space $(\mathbb{R}^n,\|\cdot\|)$ satisfies
$$
\BH(S)=
\bigcap_{S \subseteq \INT(B(x,1))} B(x,1)=
\bigcap_{S \subseteq B(x,r), r<1} B(x,1)=
\bigcap_{S \subseteq B(x,r), r < 1} B(x,r).
$$
\end{corollary}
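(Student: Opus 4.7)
The plan is to prove the string of four equalities by a short cycle of inclusions. Set $A_1 := \BH(S)$ and denote the three remaining expressions by $A_2,A_3,A_4$ in the order in which they appear in the statement.

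First I would dispatch the trivial inclusions. Whenever $S \subseteq B(x,r)$ with $r<1$, one has $S \subseteq \INT(B(x,1))$, which in turn gives $S \subseteq B(x,1)$; thus the indexing families of centers used in $A_3$, $A_2$, $A_1$ are nested, and the corresponding intersections of unit balls satisfy $A_1 \subseteq A_2 \subseteq A_3$. For the fourth set, note that every center $x$ admissible in $A_3$ comes with at least one radius $r<1$ making $(x,r)$ admissible in $A_4$, and $B(x,r) \subseteq B(x,1)$; intersecting over the larger family yields $A_4 \subseteq A_3$.

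Next I would deduce $A_1 \subseteq A_4$ from monotonicity and idempotence of the ball hull. For every pair $(x,r)$ with $r<1$ and $S \subseteq B(x,r)$, Lemma~\ref{lem_basics}\ref{bh_inclusion} and \ref{ball_is_body} give $\BH(S) \subseteq \BH(B(x,r))=B(x,r)$. Intersecting over all such pairs yields $A_1 \subseteq A_4$.

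The decisive step is $A_3 \subseteq A_1$. The case $S=\emptyset$ is trivial, since then all four sets are visibly empty, so assume $S \ne \emptyset$. Then $K:=\BH(S)$ is a non-empty b-convex body, and Lemma~\ref{lem_basics}\ref{rad_bh} yields $\RAD(K)=\RAD(S)<1$, so $K$ is b-bounded. For an arbitrary $y \notin K$, Proposition~\ref{prop_separation}\ref{sep_c} supplies a center $y_0$ with $K \subseteq B(y_0,r_0)$ for some $r_0 \in (0,1)$ and $y \notin B(y_0,1)$. Since $S \subseteq K \subseteq B(y_0,r_0)$, this $y_0$ is admissible in the indexing family of $A_3$, so $A_3 \subseteq B(y_0,1)$ and $y \notin A_3$. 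Contraposition gives $A_3 \subseteq A_1$, closing the cycle and forcing equality throughout.

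The main obstacle is really just spotting that Proposition~\ref{prop_separation}\ref{sep_c} is precisely tailored to this situation: it simultaneously produces a center witnessing membership in the strictest indexing family on the right-hand side and pushes the chosen outside point beyond the corresponding unit ball. Once that observation is in place, the rest of the proof is bookkeeping with Lemma~\ref{lem_basics}.
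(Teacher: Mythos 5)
Your proposal is correct and follows essentially the same route as the paper: the same cycle of trivial inclusions, the same use of Lemma~\ref{lem_basics}\ref{bh_inclusion} and \ref{ball_is_body} to get $\BH(S)$ inside each $B(x,r)$, and the same decisive application of Proposition~\ref{prop_separation}\ref{sep_c} (after noting via Lemma~\ref{lem_basics}\ref{rad_bh} that $\BH(S)$ is b-bounded) to separate a point outside $\BH(S)$ and conclude $A_3 \subseteq A_1$ by contraposition. The only cosmetic difference is that the paper phrases the step $\BH(S) \subseteq A_4$ via the index-set equivalence $S \subseteq B(x,r) \Leftrightarrow \BH(S) \subseteq B(x,r)$, whereas you intersect directly.
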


\begin{proof}
We assume that $S \ne \emptyset$ and put $A:=\BH(S)=\bigcap_{S \subseteq B(x,1)} B(x,1)$, $B:=\bigcap_{S \subseteq \INT(B(x,1))} B(x,1)$, $C:=\bigcap_{S \subseteq B(x,r), r<1} B(x,1)$ and $D:=\bigcap_{S \subseteq B(x,r), r < 1} B(x,r)$. The inclusions $A \subseteq B \subseteq C$ and $D \subseteq C$ are trivial. It suffices to prove that $C \subseteq A$ and $A \subseteq D$.

For proving $C \subseteq A$, we consider an arbitrary $x_0 \in \mathbb{R}^n \setminus A$ and have to show that $x_0 \notin C$. Application of Proposition~\ref{prop_separation}\ref{sep_c} to $A$,
which is b-bounded by Lemma~\ref{lem_basics}\ref{rad_bh},
and $x_0$ gives $y_0 \in \mathbb{R}^n$ and $r \in (0,1)$ such that
$S \subseteq A \subseteq B(y_0,r)$ and $x_0 \notin B(y_0,1)$. This yields $x_0 \notin C$.

For $A \subseteq D$, note that
$$
S \subseteq B(x,r) \quad\Leftrightarrow\quad A \subseteq B(x,r).
$$
Indeed, if $S \subseteq B(x,r)$, then, by Lemma~\ref{lem_basics}\ref{bh_inclusion} and \ref{ball_is_body},
$A=\BH(S)\subseteq \BH(B(x,r))=B(x,r)$. Conversely, if $A \subseteq B(x,r)$, then $S \subseteq \BH(S)= A \subseteq B(x,r)$ by Lemma~\ref{lem_basics}\ref{bh_bhbh}.

The above equivalence yields
$$
A \subseteq \bigcap_{A \subseteq B(x,r),r < 1} B(x,r) = \bigcap_{S \subseteq B(x,r),r < 1} B(x,r) = D.
$$
\end{proof}

\begin{corollary}
\label{cor_hull_alternative2}
Every b-bounded closed subset $S$ of a Minkowski space $(\mathbb{R}^n,\|\cdot\|)$ satisfies
$$
\BH(S)=
\bigcap_{S \subseteq \INT(B(x,1))} \INT(B(x,1)).
$$
\end{corollary}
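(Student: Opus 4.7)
The plan is to show the two inclusions separately, taking advantage of Corollary~\ref{cor_hull_alternative1}, which already gives the representation $\BH(S)=\bigcap_{S \subseteq \INT(B(x,1))} B(x,1)$. Denote the right-hand side of the claim by $E$. The inclusion $E \subseteq \BH(S)$ is immediate, since $\INT(B(x,1)) \subseteq B(x,1)$ for every admissible $x$, so $E$ is contained in the intersection appearing in Corollary~\ref{cor_hull_alternative1}.

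For the reverse inclusion $\BH(S) \subseteq E$, the key observation is that the inclusion defining the index set, $S \subseteq \INT(B(x,1))$, can be tightened to a statement involving $\BH(S)$. Concretely, for each $x$ with $S \subseteq \INT(B(x,1))$, I would invoke Lemma~\ref{lem_basics}\ref{rad_interior}: since $S$ is closed, there exists $r' \in (0,1)$ with $S \subseteq B(x,r')$. Then Lemma~\ref{lem_basics}\ref{bh_inclusion} and \ref{ball_is_body} yield
$$
\BH(S) \subseteq \BH(B(x,r'))= B(x,r') \subseteq \INT(B(x,1)).
$$
Intersecting over all admissible $x$ gives $\BH(S) \subseteq E$, completing the proof.

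The only subtlety, and the step that really uses both hypotheses on $S$, is the passage from $S \subseteq \INT(B(x,1))$ to $S \subseteq B(x,r')$ with $r'<1$: this is exactly what Lemma~\ref{lem_basics}\ref{rad_interior} supplies from the closedness of $S$, and it is the ingredient that prevents the statement from degenerating (without closedness one could not push the inclusion to a strictly smaller radius). The b-boundedness of $S$ is used implicitly to ensure that the index set $\{x \in \bR^n : S \subseteq \INT(B(x,1))\}$ is non-empty, so the intersection defining $E$ is genuine rather than equal to all of $\bR^n$; once that is secured, the argument is entirely routine and no further estimates are needed.
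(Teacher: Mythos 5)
Your proposal is correct and follows essentially the same route as the paper: both directions are obtained from Corollary~\ref{cor_hull_alternative1} together with the implication $S \subseteq \INT(B(x,1)) \Rightarrow \BH(S) \subseteq \INT(B(x,1))$, proved exactly as you do via Lemma~\ref{lem_basics}\ref{rad_interior}, \ref{bh_inclusion} and \ref{ball_is_body}. Your closing remarks on where closedness and b-boundedness enter are accurate and match the paper's subsequent discussion.
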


\begin{proof}
By Corollary~\ref{cor_hull_alternative1},
$$
\BH(S)=\bigcap_{S \subseteq \INT(B(x,1))} B(x,1) \supseteq \bigcap_{S \subseteq \INT(B(x,1))} \INT(B(x,1)).
$$

For the converse inclusion, note that
$$
S \subseteq \INT(B(x,1)) \quad\Rightarrow\quad
\BH(S) \subseteq \INT(B(x,1)).
$$
Indeed, if $S \subseteq \INT(B(x,1))$, then $S \subseteq B(x,r)$ for some $r \in (0,1)$ by Lemma~\ref{lem_basics}\ref{rad_interior}, and, by Lemma~\ref{lem_basics}\ref{bh_inclusion} and \ref{ball_is_body}, $\BH(S)
\subseteq \BH(B(x,r)) = B(x,r) \subseteq \INT(B(x,1)).$

The above implication yields
$$
\bigcap_{S \subseteq \INT(B(x,1))} \INT(B(x,1)) \supseteq \bigcap_{\BH(S) \subseteq \INT(B(x,1))} \INT(B(x,1)) \supseteq \BH(S).
$$
\end{proof}

The assumption of b-boundedness is essential in Corollaries~\ref{cor_hull_alternative1} and \ref{cor_hull_alternative2}. For example, if $S$ is a closed ball of radius $1$, then $\BH(S)=S$, whereas the four other intersections represent $\mathbb{R}^n$, since they are intersections over empty index sets.

To see that the assumption of closedness in Corollary~\ref{cor_hull_alternative2} cannot be dropped, consider the example $S=\INT(B(x_0,r_0))$ with $x_0 \in \mathbb{R}^n$ and $r_0 \in (0,1)$. Then
$$
\BH(\INT(B(x_0,r_0)))=\BH(B(x_0,r_0))=B(x_0,r_0)
$$
by Lemma~\ref{lem_basics}\ref{bh_bhbh} and \ref{ball_is_body}. In contrast to that,
$$
\bigcap_{\INT(B(x_0,r_0)) \subseteq \INT(B(x,1))} \INT(B(x,1))=\bigcap_{\|x-x_0\| \le 1-r_0} \INT(B(x,1))=\INT(B(x_0,r_0)),
$$
as can be checked by the triangle inequality.

In classical convexity two disjoint convex sets can be separated by a hyperplane. The analogous claim for ball convexity would say that, given two disjoint b-convex bodies $K_1, K_2 \subseteq \bR^n$, there exists a \emph{separating sphere} $S(x_0,1)$ for $K_1$ and $K_2$; i.e., $K_1 \subseteq B(x_0,1)$ and $K_2 \cap B(x_0,1)= \emptyset$. In fact, one knows even more if the underlying Minkowski space $(\bR^n,\|\cdot\|)$ is a Euclidean space (see \cite[Lemma~3.1 and Corollary 3.4]{bezdek_et_al_2007}), if its unit ball is a cube (see \cite[Corollary~3.15]{La-Na-Ta}), or if it is two-dimensional (see \cite[Theorem~4]{La-Na-Ta}). Then, for every b-convex body $K$ and every supporting hyperplane $H$ of $K$, there exists a sphere $S(x_0,1)$ such that $K \subseteq B(x_0,1)$ and $\INT(B(x_0,1)) \cap H= \emptyset$. However, the last statement fails in general (see \cite[Example~3.9]{La-Na-Ta} for an example in a generalized Minkowski space whose unit ball is not centrally symmetric). Here we show that even the (formally weaker) separation of two b-convex bodies by a unit sphere may fail in a (symmetric) Minkowski space.

\begin{example}\label{ex-l1}
Let $l_1^3$ be the three-dimensional Minkowski space with unit ball $B(o,1)=\CONV(\{(\pm 1,0,0), (0,\pm 1,0), (0,0,\pm 1)\})$, let $0 < \varepsilon < \frac{1}{2}$, and consider the segments $K_1= \left[\left(\frac{1}{4},\frac{1}{4},0\right),\left(-\frac{1}{4},-\frac{1}{4},0\right)\right]$ and $K_2= \left[\left(\frac{1}{4},-\frac{1}{4},\varepsilon\right),\left(-\frac{1}{4},\frac{1}{4},\varepsilon\right)\right]$. Then $K_1$ and $K_2$ are disjoint b-bounded b-convex bodies in $l_1^3$, and there is no unit sphere $S(x_0,1)$ such that $K_1 \subseteq B(x_0,1)$ and $K_2 \cap \INT(B(x_0,1))= \emptyset$.
\end{example}

\begin{proof}
$K_1$ is b-convex, because $K_1=B\left(\left(-\frac{3}{4},\frac{1}{4},0\right),1\right) \cap B\left(\left(\frac{3}{4},-\frac{1}{4},0\right),1\right)$, and b-bounded, since  $\RAD(K_1)=\frac{1}{2}$. Similarly, $K_2$ is b-bounded and b-convex.

If $K_1 \subseteq B(x_0,1)$, then $B(x_0,1)$ contains at least one of the points of the segment $\left[\left(\frac{1}{4},-\frac{1}{4},\frac{1}{2}\right),\left(-\frac{1}{4},\frac{1}{4},\frac{1}{2}\right)\right]$, and we obtain $K_2 \cap \INT(B(x_0,1)) \ne \emptyset$, since $0 < \varepsilon < \frac{1}{2}$.
\end{proof}

%%%%%%%%%%%%%%%%%%%%%%%%%%%%%%%%%%%%%%%%%%%%%%%%%%%%%%%%%%%%%%%%%%%%%%%%%%%

\section{Characterizations of strict convexity}

Some of our results will require strict convexity of the norm $\|\cdot\|$. On the other hand, strict convexity can be reflected by numerous properties related to concepts introduced in Section~\ref{sec_definitions}.
This fourth section here is devoted to characterizations of strict convexity. We start with characterizations by properties of balls, circumballs and circumradii; for (iv) and (v) in the following lemma we also refer to \cite{AZ, Ma-Swa-Weiss}.
%, and for (i) $\Leftrightarrow$ (iv) $\Leftrightarrow$ (v) to \cite{AZ}. However, our proof is not depending on \cite{Ma-Swa-Weiss} and \cite{AZ}.

\begin{lemma}
\label{lem_str_conv}
Let $(\mathbb{R}^n,\|\cdot\|)$ be a Minkowski space. The following are equivalent:
\begin{enumerate}[label={(\roman*)}]
\item $\|\cdot\|$ is strictly convex. \label{l_i}
\item Each supporting hyperplane of a closed ball meets that ball in exactly one point. \label{l_ii}
\item The circumradius of the intersection of any two distinct balls of the same radius $r > 0$ is smaller than $r$. \label{l_iii}
\item Every bounded non-empty subset of $\mathbb{R}^n$ has a unique circumball. \label{l_iv}
\item For any two distinct points $x_1,x_2 \in \mathbb{R}^n$, $\{x_1,x_2\}$ has a unique circumball. \label{l_v}
\end{enumerate}
\end{lemma}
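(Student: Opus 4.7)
The plan is to close the equivalence via $(i) \Leftrightarrow (ii)$ together with the cycle $(i) \Rightarrow (iii) \Rightarrow (iv) \Rightarrow (v) \Rightarrow (i)$. Since every closed ball is a homothet of $B(o,1)$, the equivalence $(i) \Leftrightarrow (ii)$ reduces to the unit ball. For $(i) \Rightarrow (ii)$, a supporting hyperplane meets $B(o,1)$ in a convex subset of $S(o,1)$, and two distinct points of such an intersection would span a proper segment on $S(o,1)$, contradicting strict convexity. For $(ii) \Rightarrow (i)$, given a proper segment $[u,v] \subseteq S(o,1)$, any supporting functional $f$ at $(u+v)/2$ satisfies $f \le 1$ on $B(o,1)$ and $f((u+v)/2) = 1$, so linearity forces $f(u) = f(v) = 1$, meaning the associated hyperplane contains $\{u, v\}$.

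The key implication is $(i) \Rightarrow (iii)$. Given $A = B(x_1, r) \cap B(x_2, r)$ with $x_1 \neq x_2$, the plan is to show that the midpoint $x_m = (x_1 + x_2)/2$ centers a ball of radius strictly less than $r$ containing $A$. For each $y \in A$, the triangle inequality gives $\|y - x_m\| \le \frac{1}{2}(\|y - x_1\| + \|y - x_2\|) \le r$; the estimate is strict as soon as $\|y - x_1\| < r$ or $\|y - x_2\| < r$. In the remaining case $\|y - x_1\| = \|y - x_2\| = r$, the vectors $(y - x_1)/r$ and $(y - x_2)/r$ are two distinct unit vectors (distinct since $x_1 \ne x_2$) whose midpoint is $(y - x_m)/r$, so strict convexity yields $\|y - x_m\|/r < 1$. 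Compactness of $A$ then promotes this pointwise strict inequality to a uniform bound $r' < r$, giving $\RAD(A) \le r' < r$.

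The remaining steps are shorter. For $(iii) \Rightarrow (iv)$, circumballs exist by the remark in Section~\ref{sec_definitions}; if two distinct balls $B(x_1, r), B(x_2, r)$ both realized $r = \RAD(S)$ for a bounded non-empty $S$, then $S \subseteq B(x_1, r) \cap B(x_2, r)$ would have circumradius $< r$ by $(iii)$, a contradiction. The step $(iv) \Rightarrow (v)$ is immediate. For $(v) \Rightarrow (i)$, I argue by contrapositive: given a proper segment $[u, v] \subseteq S(o, 1)$, so that $\|(u+v)/2\| = 1$, the pair $\{u, -v\}$ has $\|u - (-v)\| = \|u + v\| = 2$, so $\RAD(\{u, -v\}) \ge 1$ by the inequality $\RAD(S) \ge \frac{1}{2}\DIAM(S)$; meanwhile the identities $\|o - u\| = \|o - (-v)\| = 1$ and $\|(u-v)/2 - u\| = \|(u-v)/2 - (-v)\| = \|(u+v)/2\| = 1$ show that both $B(o, 1)$ and $B((u-v)/2, 1)$ are circumballs of $\{u, -v\}$, with distinct centers since $u \ne v$.

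The main obstacle should be $(i) \Rightarrow (iii)$: one must carefully split into the generic case (where the triangle inequality alone gives strictness) and the boundary case (where strict convexity must be invoked on the two equidistant unit vectors), and then upgrade pointwise strict inequality to a uniform gap via compactness of $A$. The choice of the auxiliary second circumball $B((u-v)/2, 1)$ in $(v) \Rightarrow (i)$ is also slightly non-obvious but follows from a direct norm computation.
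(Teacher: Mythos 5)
Your proof is correct, and for most of the lemma it runs parallel to the paper's: the two directions of \ref{l_i}$\Leftrightarrow$\ref{l_ii} are the same contrapositive arguments (the paper separates $\INT(B(o,1))$ from the segment by a hyperplane where you invoke a supporting functional at the midpoint, which is the same thing), and your \ref{l_i}$\Rightarrow$\ref{l_iii} is essentially the paper's proof of the inclusion $B(x_1,r)\cap B(x_2,r)\subseteq\INT\bigl(B\bigl(\frac{x_1+x_2}{2},r\bigr)\bigr)$ followed by the compactness upgrade, which the paper packages as Lemma~\ref{lem_basics}\ref{rad_interior}; in the equality case you phrase strict convexity as ``the midpoint of two distinct unit vectors has norm $<1$'' while the paper phrases it as ``equality forces a segment in the sphere'' --- both rest on the same one-line convexity argument that the paper also leaves implicit, so you are at the same level of rigor. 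The genuine difference is the closing of the cycle: the paper proves only \ref{l_i}$\Rightarrow$\ref{l_iii}$\Rightarrow$\ref{l_iv} and then cites \cite[Lemma~1.2]{AZ} for \ref{l_iv}$\Leftrightarrow$\ref{l_v}$\Leftrightarrow$\ref{l_i}, whereas you prove \ref{l_v}$\Rightarrow$\ref{l_i} directly by exhibiting, for a segment $[u,v]\subseteq S(o,1)$, the two-point set $\{u,-v\}$ of diameter $2$ with the two distinct circumballs $B(o,1)$ and $B\bigl(\frac{u-v}{2},1\bigr)$; the computations $\bigl\|u-\frac{u-v}{2}\bigr\|=\bigl\|-v-\frac{u-v}{2}\bigr\|=\bigl\|\frac{u+v}{2}\bigr\|=1$ and $\RAD(\{u,-v\})\ge\frac12\DIAM(\{u,-v\})=1$ check out. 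This buys a self-contained proof of the whole lemma at the cost of a short extra construction, where the paper buys brevity by outsourcing one link to the literature.
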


\begin{proof}
\ref{l_i}$\Rightarrow$\ref{l_ii}: If \ref{l_ii} fails, then some ball meets one of its supporting hyperplanes in at least two distinct points $x_1,x_2$. Then the segment $[x_1,x_2]$ is contained in the boundary of that ball, contradicting \ref{l_i}.

\ref{l_ii}$\Rightarrow$\ref{l_i}: If \ref{l_i} fails, then the boundary of $B(o,1)$ contains a line segment $L$ of positive length. The disjoint convex sets $\INT(B(o,1))$ and $L$ can be separated by a hyperplane $H$. Then $H$ is a supporting hyperplane of $B(o,1)$ and contains $L$, contradicting \ref{l_ii}.

\ref{l_i}$\Rightarrow$\ref{l_iii}: If $x_1, x_2 \in \mathbb{R}^n$ are distinct points and if $r > 0$, then
\begin{equation}
\label{eq_intersection_inclusion}
B(x_1,r) \cap B(x_2,r) \subseteq {\rm int}\left(B\left(\frac{x_1+x_2}{2},r\right)\right),
\end{equation}
which implies our claim $\RAD(B(x_1,r) \cap B(x_2,r))<r$ by Lemma~\ref{lem_basics}\ref{rad_interior}. To verify \eqref{eq_intersection_inclusion}, assume the contrary; i.e.,
$\left\|x-\frac{x_1+x_2}{2}\right\| \ge r$ for some $x \in B(x_1,r) \cap B(x_2,r)$. Then
$$
r \le \left\|x-\frac{x_1+x_2}{2}\right\| \le \frac{1}{2} (\|x-x_1\|+\|x-x_2\|) \le \frac{1}{2}(r+r)=r,
$$
hence all terms in the above estimate agree and we obtain $\|x-x_1\|=\|x-x_2\|=\left\|x-\frac{x_1+x_2}{2}\right\|=r$. This shows that $[x_1,x_2]$ is a segment in $S(x,r)$, contradicting \ref{l_i} and proving \eqref{eq_intersection_inclusion}.

\ref{l_iii}$\Rightarrow$\ref{l_iv}: If \ref{l_iv} fails, then there is a bounded set $S$ with circumradius $\RAD(S)>0$ that has two circumballs $B(x_1,\RAD(S))$ and $B(x_2,\RAD(S))$, $x_1 \ne x_2$. This implies $B(x_1,\RAD(S)) \cap B(x_2,\RAD(S)) \supseteq S$ and $\RAD(B(x_1,\RAD(S)) \cap B(x_2,\RAD(S))) \ge \RAD(S)$, contradicting \ref{l_iii}.

For \ref{l_iv}$\Leftrightarrow$\ref{l_v}$\Leftrightarrow$\ref{l_i}, see \cite[Lemma~1.2]{AZ}.
\end{proof}

Now we come to characterizations of strict convexity of norms in terms of concepts related to b-convexity that are defined in Section~\ref{sec_definitions}.

\begin{proposition}
\label{prop_str_conv}
Let $(\mathbb{R}^n,\|\cdot\|)$ be a Minkowski space. The following are equivalent:
\begin{enumerate}[label={(\roman*)}]
\item The norm $\|\cdot\|$ is strictly convex. \label{p_i} \setcounter{enumi}{5}
\item Every b-convex body that is not b-bounded is a closed ball of radius $1$. \label{p_vi}
\item Every b-convex body that is not b-bounded has only one supporting sphere. \label{p_vii}
\item For every boundary point $x$ of a b-convex body $K$ that is not b-bounded, there exists only one supporting sphere of $K$ that contains $x$. \label{p_viii}
\item For every $x \in \mathbb{R}^n$, every $r \in (0,1)$ and every $x_0 \in \BD(B(x,r))$, $B(x,r)$ has only one supporting sphere that contains $x_0$. \label{p_ix}
\item There exist $x \in \mathbb{R}^n$ and $r \in (0,1)$ such that, for every $x_0 \in \BD(B(x,r))$, $B(x,r)$ has only one supporting sphere that contains $x_0$. \label{p_x}
\item For every $x \in \mathbb{R}^n$ and every $r \in (0,1)$, each supporting sphere of $B(x,r)$ meets $B(x,r)$ in only one point. \label{p_xi}
\item There exist $x \in \mathbb{R}^n$ and $r \in (0,1)$ such that each supporting sphere of  $B(x,r)$ meets $B(x,r)$ in only one point. \label{p_xii}
\item For every $x \in \mathbb{R}^n$ and every $r \in (0,1)$,
$\BEXP(B(x,r))=S(x,r)$. \label{p_xiii}
\item There exist $x \in \mathbb{R}^n$ and $r \in (0,1)$ such that $\BEXP(B(x,r))=S(x,r)$. \label{p_xiv}
\item Every b-convex body is strictly convex. \label{p_xv}
\item For any two distinct points $x_1,x_2 \in \mathbb{R}^n$,
$\BH(\{x_1,x_2\})$ is strictly convex. \label{p_xvi}
\item Every b-convex body that contains at least two points has non-empty interior. \label{p_xvii}
\item For any two distinct points $x_1,x_2 \in \mathbb{R}^n$,
$\INT(\BH(\{x_1,x_2\}))$ is non-empty. \label{p_xviii}
\end{enumerate}
\end{proposition}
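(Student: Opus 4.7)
The plan is to split the long list of conditions into three groups and prove, inside each group, a cycle of implications that passes through (i). Group~A consists of the ``non-b-bounded body'' conditions (vi)--(viii); group~B consists of the ``small-ball'' conditions (ix)--(xiv); group~C consists of the ``strict convexity / non-empty interior'' conditions (xv)--(xviii). The tools I would draw on are Lemma~\ref{lem_str_conv} (classical equivalents of strict convexity, in particular uniqueness of circumballs and strict shrinking of intersections of equal balls), Lemma~\ref{lem_circumintersection}, and Proposition~\ref{prop_separation}\ref{sep_a}, which supplies a supporting sphere through every boundary point of a b-convex body.

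For group~A I run (i)$\Rightarrow$(vi)$\Rightarrow$(vii)$\Rightarrow$(viii)$\Rightarrow$(i). If $K$ is a b-convex body that is not b-bounded, then $K=\BH(K)$ forces $\RAD(K)=1$ and $K \subseteq B(y_0,1)$ for some $y_0$; strict convexity and Lemma~\ref{lem_str_conv}\ref{l_iv} then make $B(y_0,1)$ the unique unit ball containing $K$, giving $K=B(y_0,1)$. The implications (vi)$\Rightarrow$(vii) and (vii)$\Rightarrow$(viii) are straightforward. For (viii)$\Rightarrow$(i) I argue by contrapositive: failure of strict convexity, via Lemma~\ref{lem_str_conv}\ref{l_iii} and a scaling, yields distinct $y_1,y_2$ with $K:=B(y_1,1) \cap B(y_2,1)$ satisfying $\RAD(K)=1$, and two applications of Lemma~\ref{lem_circumintersection} locate a point in $K \cap S(y_1,1) \cap S(y_2,1)$ that carries two distinct supporting spheres of $K$. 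Group~B follows the same pattern. The key algebraic fact is that any supporting sphere $S(y,1)$ of $B(x,r)$ with $r \in (0,1)$ satisfies $\|y-x\|=1-r$, so for $x_0=x+rc$ (with $c \in S(o,1)$) in the corresponding b-face the condition $\|x_0-y\|=1$ becomes the triangle equality $\|rc+(x-y)\|=\|rc\|+\|x-y\|$; strict convexity forces $rc$ and $x-y$ to be positively proportional, yielding both a singleton b-face (hence (xi), and via Proposition~\ref{prop_separation}\ref{sep_a} also (xiii)) and a unique centre per boundary point (hence (ix)). Conversely, a segment $[a,b] \subseteq S(o,1)$ with $a \ne b$ produces, for arbitrary $x$ and $r<1$, a supporting sphere $S(x-(1-r)a,1)$ of $B(x,r)$ whose b-face contains both $x+ra$ and $x+rb$ (refuting (xii) and (xiv)) and two distinct supporting spheres through $x+r(a+b)/2$ (refuting (x)); the trivial implications (ix)$\Rightarrow$(x), (xi)$\Rightarrow$(xii), (xiii)$\Rightarrow$(xiv) close the loops.

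For group~C I run (i)$\Rightarrow$(xv)$\Rightarrow$(xvi)$\Rightarrow$(xviii)$\Rightarrow$(xvii)$\Rightarrow$(i). The first step uses that a supporting sphere $S(y,1)$ at the midpoint of a boundary segment $[p,q] \subseteq \BD(K)$ of a b-convex body $K$ forces $\|p-y\|=\|q-y\|=1$ together with the midpoint also having norm~$1$, so $[p-y,q-y]$ is a proper segment on $S(o,1)$, contradicting (i). The next three implications are routine: $\BH(\{x_1,x_2\})$ is itself a b-convex body, a strictly convex set containing two distinct points cannot lie in a hyperplane of $\mathbb{R}^n$ (else $[x_1,x_2]$ would sit in its boundary), and every b-convex body with two distinct points $x_1,x_2$ contains $\BH(\{x_1,x_2\})$. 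For the return (xvii)$\Rightarrow$(i), a segment $[a,b] \subseteq S(o,1)$ yields the b-convex body $B(o,1) \cap B(a+b,1)$, which contains $[a,b]$ but is contained in $S(o,1)$ (since $\|a+b\|=2$ forces equality in the triangle inequality at every point of the intersection), hence has empty interior in $\mathbb{R}^n$. The main obstacle throughout is the return implications, especially (viii)$\Rightarrow$(i), where one must arrange simultaneously that $K$ is not b-bounded and that a common point of two supporting spheres exists; the double use of Lemma~\ref{lem_circumintersection}, going from $\RAD(K)=1$ first to $\RAD(K \cap S(y_1,1))=1$ and then to $\RAD(K \cap S(y_1,1) \cap S(y_2,1))=1$, is the crucial device.
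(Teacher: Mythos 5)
Your proposal is correct and its global architecture coincides with the paper's: the same three families of conditions, each closed into a cycle through (i) using Lemma~\ref{lem_str_conv}, Lemma~\ref{lem_circumintersection} and Proposition~\ref{prop_separation}\ref{sep_a}, with the same trivial implications doing the bookkeeping. Two local steps are argued differently, both defensibly. For (i)$\Rightarrow$(ix),(xi) the paper invokes a dilatation fact (two balls on the same side of a common supporting hyperplane in a strictly convex space are related by the dilatation determined by the touching points), whereas you derive $\|y-x\|=1-r$ and then exploit equality in the triangle inequality, which under strict convexity forces $rc$ and $x-y$ to be positively proportional; your computation is more elementary and determines the touching point from the centre and vice versa in one stroke. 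For (viii)$\Rightarrow$(i) the paper asserts that points of $S(x_1,1)\cap S(x_2,1)$ lie in $\BD(K)$ without explicitly showing that this set is non-empty; your double application of Lemma~\ref{lem_circumintersection} (first to get $\RAD(K\cap S(y_1,1))=1$, then to extract a point of $K\cap S(y_1,1)\cap S(y_2,1)$) supplies that existence explicitly and is a genuine improvement in rigour. Your group~C chain (xvi)$\Rightarrow$(xviii)$\Rightarrow$(xvii)$\Rightarrow$(i) reorders the paper's implications but is equally valid. One point to tighten: exhibiting a single non-singleton b-face does not by itself refute (xiv), since the points of that face could in principle still be b-exposed via \emph{other} supporting spheres; to conclude $x+r\frac{a+b}{2}\notin\BEXP(B(x,r))$ you should add, as the paper does, that every supporting sphere $S(y,1)$ through the midpoint of $[x+ra,\,x+rb]$ must contain the whole segment, because a point of $S(y,1)$ lying in the relative interior of a segment contained in $B(y,1)$ forces the entire segment into $S(y,1)$.
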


\begin{proof}
The implications \ref{p_vi}$\Rightarrow$\ref{p_vii}$\Rightarrow$\ref{p_viii}, \ref{p_ix}$\Rightarrow$\ref{p_x}, \ref{p_xi}$\Rightarrow$\ref{p_xii},
\ref{p_xiii}$\Rightarrow$\ref{p_xiv},\\ \ref{p_xv}$\Rightarrow$\ref{p_xvi}, and \ref{p_xvii}$\Rightarrow$\ref{p_xviii} are obvious.

\ref{p_i}$\Rightarrow$\ref{p_vi}: Every b-convex body $K$ is a non-empty intersection of a non-empty family of closed balls of radius $1$. If the family consisted of more than one ball, then its intersection $K$ would be b-bounded by Lemma~\ref{lem_str_conv}\ref{l_i}$\Rightarrow$\ref{l_iii}. Hence the only b-convex bodies that are not b-bounded are closed balls of radius $1$.

\ref{p_viii}$\Rightarrow$\ref{p_i}: Suppose that \ref{p_i} fails.
Then condition \ref{l_iii} from Lemma~\ref{lem_str_conv} fails as well, and there are two points $x_1 \ne x_2$ such that $\RAD(B(x_1,1) \cap B(x_2,1))=1$. The body $K=B(x_1,1) \cap B(x_2,1)$ shows that \ref{p_viii} fails, too, because every $x \in \BD(B(x_1,1)) \cap \BD(B(x_2,1))$ belongs to $\BD(K)$ and has the two supporting spheres $S(x_1,1)$ and $S(x_2,1)$.

\ref{p_i}$\Rightarrow$\ref{p_ix} and \ref{p_i}$\Rightarrow$\ref{p_xi}: We use the fact that \emph{if two balls $B(y,s)$ and $B(y',s')$ of positive radii
in a strictly convex Minkowski space are on the same side of a common supporting hyperplane $H$ with respective touching points $y_0$ and $y'_0$, then the dilatation $\varphi$ that is uniquely determined by $\varphi(y_0)=y'_0$ and the dilatation factor $\frac{s'}{s}$ maps $B(y,s)$ onto $B(y',s')$.} To see this, consider the homotheties $\delta$ and $\delta'$ that map $B(y,s)$ and $B(y',s')$ onto $B(o,1)$, respectively. Then $\delta(H)=\delta'(H)$, because $B(o,1)$ has only one supporting hyperplane with the same outer normal vector as $H$, and $\delta(y_0)=\delta'(y'_0)$, since $\delta(H)=\delta'(H)$ has only one touching point with $B(o,1)$ (see Lemma~\ref{lem_str_conv}).
Now $\varphi=(\delta')^{-1} \circ \delta$, and the fact is verified.

Coming back to the proof of \ref{p_i}$\Rightarrow$\ref{p_ix} and \ref{p_i}$\Rightarrow$\ref{p_xi}, we consider an arbitrary supporting sphere $S(y,1)$ of $B(x,r)$ and suppose that $x_0$ belongs to the b-support set $B(x,r) \cap S(y,1)$. The supporting hyperplane of $B(y,1)$ at $x_0$ supports $B(x,r)$ as well. Now the above fact says that $B(y,1)$ is the image of $B(x,r)$ under the dilatation $\varphi$ with fixed point $x_0$ and factor $\frac{1}{r}$. This shows in particular that the supporting sphere $S(y,1)$ is uniquely determined by the touching point $x_0$, which proves \ref{p_ix} (because there exists at least one supporting sphere at $x_0$ according to Proposition~\ref{prop_separation}\ref{sep_a}). To show \ref{p_xi}, we have to prove that every point $x_1 \in B(x,r) \cap S(y,1)$ coincides with $x_0$. By the same argument as above, $B(y,1)$ is the image of $B(x,r)$ under the dilatation $\psi$ with fixed point $x_1$ and factor $\frac{1}{r}$. We obtain
$$
y=\varphi(x)=x_0+\frac{1}{r}(x-x_0) \quad\mbox{ and }\quad
y=\psi(x)=x_1+\frac{1}{r}(x-x_1),
$$
which gives $x_0=x_1$ and completes the proof of \ref{p_xi}.

\ref{p_x}$\Rightarrow$\ref{p_i}: Suppose that \ref{p_i} fails. Then, for every $x \in \mathbb{R}^n$ and every $r \in (0,1)$, $S(x,r)$ contains a line segment $[x_0,x_1] \subseteq S(x,r)$, $x_0 \ne x_1$. If $\varphi_0$ and $\varphi_1$ are dilatations with factor $\frac{1}{r}$ and fixed points $x_0$ and $x_1$, respectively, then $\varphi_0(S(x,r))$ and $\varphi_1(S(x,r))$ are distinct supporting spheres of $B(x,r)$. Clearly, we have
$$
x_0= \varphi_0(x_0) \in \varphi_0([x_0,x_1]) \subseteq \varphi_0(S(x,r)).
$$
Moreover,
$$
x_0= \varphi_1(rx_0+(1-r)x_1) \in \varphi_1([x_0,x_1]) \subseteq \varphi_1(S(x,r)).
$$
Hence $\varphi_0(S(x,r))$ and $\varphi_1(S(x,r))$ both are supporting spheres of $B(x,r)$ at $x_0 \in \BD(B(x,r))$, and \ref{p_x} is disproved.

\ref{p_xi}$\Rightarrow$\ref{p_xiii} and \ref{p_xii}$\Rightarrow$\ref{p_xiv} follow from Proposition~\ref{prop_separation}\ref{sep_a}.

\ref{p_xiv}$\Rightarrow$\ref{p_i}: If \ref{p_i} fails, then every ball $B(x,r)$, $x \in \mathbb{R}^n$, $r \in (0,1)$, contains a segment $[x_1,x_2]$, $x_1 \ne x_2$, in its boundary $S(x,r)$. We shall show that $\frac{x_1+x_2}{2} \notin \BEXP(B(x,r))$, this way disproving \ref{p_xiv}. Indeed, if $S(y,1)$ is a supporting sphere of $B(x,r)$ with touching point $\frac{x_1+x_2}{2} \in S(y,1)$, then $[x_1,x_2] \subseteq B(x,r) \subseteq B(y,1)$,
and the point $\frac{x_1+x_2}{2}$ (from the relative interior) of $[x_1,x_2]$ is in $S(y,1)=\BD(B(y,1))$. Hence $[x_1,x_2] \subseteq S(y,1)$. This shows that the exposed b-face $B(x,r) \cap S(y,1)$ that contains $\frac{x_1+x_2}{2}$ must necessarily cover the whole segment $[x_1,x_2]$. Thus $\frac{x_1+x_2}{2} \notin \BEXP(B(x,r))$.

\ref{p_i}$\Rightarrow$\ref{p_xv}: Assume that \ref{p_xv} fails; i.e., there are a b-convex body $K$ and two points $x_1 \ne x_2$ such that $[x_1,x_2] \subseteq \BD(K)$. By Proposition~\ref{prop_separation}\ref{sep_a}, there is a supporting sphere $S(y,1)$ of $K$ such that $\frac{x_1+x_2}{2} \in S(y,1)$. As above, we have $[x_1,x_2] \subseteq K \subseteq B(y,1)$ and $\frac{x_1+x_2}{2} \in S(y,1)=\BD(B(y,1))$, which yields $[x_1,x_2] \subseteq S(y,1)$ and contradicts \ref{p_i}.

\ref{p_xv}$\Rightarrow$\ref{p_xvii} and \ref{p_xvi}$\Rightarrow$\ref{p_xviii}: If a convex body $K$ contains two distinct points $x_1$ and $x_2$ and has empty interior, then $K$ is not strictly convex, because $[x_1,x_2] \subseteq K = \BD(K)$. This yields the two implications.

\ref{p_xviii}$\Rightarrow$\ref{p_i}: If \ref{p_i} fails, then there are $x_1 \ne x_2$ such that $[x_1,x_2] \subseteq S(o,1)$. Hence
$$
\begin{array}{rcl}
\left[ \frac{x_1-x_2}{2}, \frac{x_2-x_1}{2} \right]
&=& \left([x_1,x_2] -\frac{x_1+x_2}{2} \right) \cap \left( -[x_2,x_1]+\frac{x_1+x_2}{2} \right)\\[1ex]
&\subseteq& \left(S(o,1) -\frac{x_1+x_2}{2} \right) \cap \left( S(o,1)+\frac{x_1+x_2}{2} \right)\\[1ex]
&=& S\left(-\frac{x_1+x_2}{2},1 \right) \cap  S\left(\frac{x_1+x_2}{2},1 \right)\\[1ex]
&=& B\left(-\frac{x_1+x_2}{2},1 \right) \cap  B\left(\frac{x_1+x_2}{2},1 \right).
\end{array}
$$
The last equation is a consequence of $\left\|\frac{x_1+x_2}{2}-\left(-\frac{x_1+x_2}{2}\right)\right\|=2\left\|\frac{x_1+x_2}{2}\right\|=2$. We obtain
$$
\begin{array}{rcl}
\INT\!\left(\BH\left(\left\{\frac{x_1-x_2}{2}, \frac{x_2-x_1}{2}\right\}\right)\right)
&\subseteq&
\INT\!\left(\BH\!\left(B\left(-\frac{x_1+x_2}{2},1 \right) \cap B\left(\frac{x_1+x_2}{2},1 \right)\right)\right)\\[1ex]
&=& \INT\!\left(B\left(-\frac{x_1+x_2}{2},1 \right) \cap B\left(\frac{x_1+x_2}{2},1 \right)\right)\\[1ex]
&=& \INT\!\left(S\left(-\frac{x_1+x_2}{2},1 \right) \cap S\left(\frac{x_1+x_2}{2},1 \right)\right)\\[1ex]
&=& \emptyset,
\end{array}
$$
which contradicts \ref{p_xviii}.
\end{proof}

% 5. %%%%%%%%%%%%%%%%%%%%%%%%%%%%%%%%%%%%%%%%%%%%%%%%%%%%%%%%%%%%%%%%%%%%%%%%%%

\section{Representation of ball hulls ``from inside''}

In this section we deal with b-convexity of unions of increasing sequences of b-convex bodies and with the representation of ball hulls of sets by unions of ball hulls of finite subsets. We start with an auxiliary statement.

\begin{lemma}\label{lemma1}
Let $K \in \cK^n$, let $H \subseteq \bR^n$ be a hyperplane, and let $(y_i)^\infty_{i = 1} \subseteq \bR^n$ such that
$y_i \!\stackrel{{\scriptstyle i \to \infty}}{\longrightarrow} \! y_0 \in \bR^n$ and $H \cap (K + y_i) \not= \emptyset$ for all $i = 1,2, \dots$ Then $H \cap (K + y_i)
\! \stackrel{{\scriptstyle i \to \infty}}{\longrightarrow} \! H \cap (K + y_0)$ in the
Hausdorff distance.
\end{lemma}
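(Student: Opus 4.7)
The plan is to reduce the assertion to the Hausdorff continuity of parallel slices of $K$. I would write $H = \{x \in \bR^n : \langle u,x\rangle = c\}$ for a unit vector $u$ and scalar $c$, set $c_i := c - \langle u,y_i\rangle$ so that $c_i \to c_0$, and observe that
\[
H \cap (K+y_i) = K_{c_i} + y_i,
\qquad\text{where } K_t := \{x \in K : \langle u,x\rangle = t\}.
\]
Since $(A,y) \mapsto A+y$ is Lipschitz in the Hausdorff metric in both arguments, the task reduces to proving that $K_{c_i} \to K_{c_0}$ in the Hausdorff distance.

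Setting $m := \min_{x \in K}\langle u,x\rangle$ and $M := \max_{x \in K}\langle u,x\rangle$, the hypothesis $H \cap (K+y_i) \neq \emptyset$ amounts to $c_i \in [m,M]$; passing to the limit gives $c_0 \in [m,M]$, which already shows that $H \cap (K+y_0)$ is non-empty. Upper semicontinuity of the slice map $t \mapsto K_t$ is automatic: any accumulation point $x_0$ of a sequence $x_i \in K_{c_i}$ lies in the closed set $K$ and satisfies $\langle u,x_0\rangle = c_0$, hence lies in $K_{c_0}$.

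The heart of the argument is the lower semicontinuity at $t = c_0$, and this is where I expect the only subtlety: producing, for every $x_0 \in K_{c_0}$, a sequence $x_i \in K_{c_i}$ with $x_i \to x_0$, especially when $c_0$ sits on the boundary of $[m,M]$. If $m = M$, then $K$ lies entirely in a hyperplane parallel to $H$, which forces $c_i = c_0$ for every $i$, so $K_{c_i} = K_{c_0}$ and the choice $x_i := x_0$ works. Otherwise, I would fix $z_-, z_+ \in K$ with $\langle u,z_-\rangle = m$ and $\langle u,z_+\rangle = M$ and interpolate: for $c_i < c_0$ (which necessarily entails $c_0 > m$) set
\[
\mu_i := \frac{c_0 - c_i}{c_0 - m},
\qquad
x_i := (1-\mu_i) x_0 + \mu_i z_-,
\]
for $c_i > c_0$ (which entails $c_0 < M$) set
\[
\mu_i := \frac{c_i - c_0}{M - c_0},
\qquad
x_i := (1-\mu_i) x_0 + \mu_i z_+,
\]
and for $c_i = c_0$ take $x_i := x_0$. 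In every case $\mu_i \in [0,1]$, convexity of $K$ gives $x_i \in K$, linearity of $\langle u,\cdot\rangle$ gives $\langle u,x_i\rangle = c_i$, and $\mu_i \to 0$ forces $x_i \to x_0$.

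Combining upper and lower semicontinuity yields $K_{c_i} \to K_{c_0}$ in the Hausdorff metric, and the Lipschitz dependence on translation closes the argument. The expected main obstacle is thus the boundary case of the lower semicontinuity, resolved by the one-sided interpolation with $z_-$ or $z_+$ sketched above.
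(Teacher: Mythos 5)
Your proposal is correct and follows essentially the same route as the paper: both reduce Hausdorff convergence to upper plus lower semicontinuity (the paper via Schneider's Theorem 1.8.7), and both establish the crucial lower semicontinuity by the same one-sided linear interpolation of a given point of the limit slice toward the minimizer or maximizer of $\langle u,\cdot\rangle$ on $K$, with identical interpolation coefficients. Your reformulation in terms of slices $K_t$ and Lipschitz dependence on translation is only a cosmetic repackaging of the paper's case distinction on $\langle u,\tilde{x}_i\rangle$ versus $\langle u,x_0\rangle$.
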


\begin{proof}
First note that $H \cap (K+y_0) \not= \emptyset$, and in turn $H \cap (K+y_0) \in \cK^n$. Indeed,
for every $i \ge 1$, we can pick $z_i \in H \cap (K+y_i)$, i.e., $z_i = x_i + y_i$ with $x_i \in K$. By the compactness of $K$ we see that,
w.l.o.g., $x_i \, \stackrel{{\scriptstyle i \to \infty}}{\longrightarrow} \, x_0 \in K$. We get
$z_0 : = x_0 + y_0 = \lim_{i \to \infty} z_i \in H \cap (K+y_0)$, because $H$ is closed.

By \cite[Theorem 1.8.7]{schneider1993}, our claim $H \cap (K+y_i) \! \stackrel{{\scriptstyle i \to \infty}}{\longrightarrow} \! H \cap (K + y_0)$ is now equivalent to the following conditions taken together:
\begin{enumerate}
\item[(a)] for every $t_0 \in H \cap (K+y_0)$, there exist $t_i \in H \cap (K+y_i)$, $i = 1,2,\dots$, such that $t_i \! \stackrel{{\scriptstyle i \to \infty}}{\longrightarrow} \! t_0$;
\item[(b)] if $(t_{i_j})^\infty_{j=1}$ is a sequence with $i_1 < i_2 < \dots$, $t_{i_j} \in H \cap (K+y_{i_j})$, and $t_{i_j} \! \stackrel{{\scriptstyle j \to \infty}}{\longrightarrow} \! t_0 \in \bR^n$,
then $t_0 \in H  \cap (K+y_0)$.
\end{enumerate}

\emph{Proof of (a).} Suppose that $H = \{x \in \bR^n: \langle u,x \rangle = c\}$, $\langle \cdot,\cdot \rangle$ denoting the usual scalar product. Then
fix $t_0 \in H \cap (K+y_0)$, i.e., $t_0 = x_0 + y_0$ with $x_0 \in K$ and
\begin{equation}\label{eq*}
\langle u,t_0 \rangle = c, \mbox{ i.e., } \langle u,x_0 \rangle = c- \langle u,y_0 \rangle\,.
\end{equation}
Pick $x^*, x^{**} \in K$ such that
$$
\langle u, x^*\rangle = \min \{\langle u,x\rangle : x \in K \}, \quad \langle u, x^{**}\rangle = \max \{\langle u,x\rangle : x \in K\}\,.
$$
For every $i = 1,2, \dots$, $H \cap (K+y_i) \not= \emptyset$ gives $\tilde{x}_i \in K$ such that
\begin{equation}\label{eq**}
 \langle u, \tilde{x}_i + y_i \rangle = c\,.
\end{equation}
We choose $t_i = x_i + y_i \in H \cap (K+y_i)$ as follows: We know from $\tilde{x}_i \in K$ that $\langle u, \tilde{x}_i \rangle \in [\langle u,x^*\rangle, \langle u,x^{**}\rangle]$.

\textbf{Case 1: $\langle u, \tilde{x}_i \rangle = \langle u, x_0 \rangle$. } In that case we put
\begin{equation}\label{(1)}
x_i : = x_0\,.
\end{equation}
Then
$t_i = x_0 + y_i \in K + y_i$ and $t_i \in H$, because $\langle u, t_i \rangle = \langle u, x_0 + y_i \rangle = \langle u, \tilde{x}_i + y_i \rangle \stackrel{\eqref{eq**}}{=} c$.

\textbf{Case 2: $\langle u, \tilde{x}_i \rangle \in [\langle u, x^* \rangle, \langle u, x_0 \rangle)$. } Then
\begin{equation}\label{(2)}
x_i : = \frac{\langle u, \tilde{x}_i \rangle - \langle u, x^*\rangle}{\langle u, x_0 \rangle - \langle u, x^*\rangle} x_0 +
\frac{\langle u, x_0 \rangle - \langle u, \tilde{x}_i \rangle}{\langle u,x_0 \rangle - \langle u, x^*\rangle} x^*
\end{equation}
satisfies $x_i \in [x_0, x^*] \subseteq K$, hence $t_i = x_i + y_i \in K + y_i$, and
$$
\langle u, x_i \rangle = \frac{\langle u, \tilde{x}_i \rangle - \langle u, x^*\rangle}{\langle u, x_0\rangle - \langle u, x^*\rangle} \langle u, x_0 \rangle +
\frac{\langle u, x_0\rangle - \langle u, \tilde{x}_i \rangle}{\langle u, x_0\rangle - \langle u, x^*\rangle} \langle u,x^*\rangle = \langle u, \tilde{x}_i \rangle\,.
$$
This gives $t_i \in H$, because
$\langle u, t_i \rangle = \langle u, x_i + y_i \rangle = \langle u, \tilde{x}_i + y_i \rangle \stackrel{\eqref{eq**}}{=} c$.

\textbf{Case 3: $\langle u, \tilde{x}_i \rangle \in (\langle u, x_0\rangle, \langle u, x^{**}\rangle]$. } Then
\begin{equation}\label{(3)}
x_i : = \frac{\langle u, x^{**}\rangle - \langle u, \tilde{x}_i \rangle}{\langle u, x^{**}\rangle - \langle u, x_0 \rangle} x_0 +
\frac{\langle u, \tilde{x}_i \rangle - \langle u, x_0\rangle}{\langle u, x^{**}\rangle - \langle u, x_0\rangle} x^{**}
\end{equation}
satisfies $x_i \in [x_0, x^{**}] \subseteq K$, hence $t_i = x_i + y_i \in K+y_i$, and
$$
\langle u, x_i \rangle = \frac{\langle u, x^{**}\rangle - \langle u, \tilde{x}_i \rangle}{\langle u, x^{**}\rangle - \langle u, x_0\rangle}
\langle u,x_0\rangle + \frac{\langle u, \tilde{x}_i \rangle - \langle u, x_0\rangle}{\langle u,x^{**}\rangle - \langle u,x_0\rangle}
\langle u, x^{**}\rangle = \langle u, \tilde{x}_i \rangle\,.
$$
This yields $t_i \in H$, because $\langle u, t_i \rangle = \langle u, x_i + y_i \rangle = \langle u, \tilde{x}_i + y_i \rangle \stackrel{\eqref{eq**}}{=} c$.

Finally, for proving $t_i \! \stackrel{{\scriptstyle i \to \infty}}{\longrightarrow} \! t_0$, we use the following arguments.
We get $x_i \! \stackrel{{\scriptstyle i \to \infty}}{\longrightarrow} \! x_0$ by partitioning the sequence $(x_i)_{i=1}^\infty$ into three subsequences corresponding to the above Cases 1--3, where each of the
subsequences (if it is infinite) converges to $x_0$. In Case 1 this is trivial, and in the other two cases it follows from
$$
\langle u, \tilde{x}_i \rangle \stackrel{\eqref{eq**}}{=} c - \langle u,y_i \rangle \! \stackrel{{\scriptstyle i \to \infty}}{\longrightarrow} \! c- \langle u, y_0\rangle \stackrel{\eqref{eq*}}{=} \langle u, x_0 \rangle\,
$$
and from the definitions (\ref{(2)}) and (\ref{(3)}).
This yields $t_i = x_i + y_i \! \stackrel{{\scriptstyle i \to \infty}}{\longrightarrow} \! x_0 + y_0 = t_0$.

\emph{Proof of (b).}  The inclusion $t_{i_j} \in H \cap (K+ y_{i_j})$ gives $x_{i_j} : = t_{i_j} - y_{i_j} \in K$. Hence
$x_{i_j} = t_{i_j} - y_{i_j} \! \stackrel{{\scriptstyle j \to \infty}}{\longrightarrow} \! t_0 - y_0 \in K$, because $K$ is closed. Thus
$t_0 \in K + y_0$. Moreover, $t_{i_j} \in H$ yields $t_{i_j} \! \stackrel{{\scriptstyle i \to \infty}}{\longrightarrow} \! t_0 \in H$, because $H$ is closed. Hence
$t_0 \in H \cap (K + y_0)$.
\end{proof}

\begin{remark}\label{rem1}
Note that $H$ cannot be replaced by an affine subspace $L$ of arbitrary dimension in Lemma~\ref{lemma1}. See the following example, where
$$
K  :=  \left\{(\xi_1,\xi_2,\xi_3) \in \bR^3 : |\xi_3| \le 1 - \sqrt{\xi^2_1 + \xi^2_2}\right\}\,.
$$
(Thus $K =  {\rm conv} (\{(\cos \varphi, \sin \varphi, 0) : 0 \le \varphi < 2\pi\} \cup \{(0,0, \pm 1)\})$ is a compact double cone.) Consider the affine subspace
$L :    =  \AFF(\{(1,0,0), (0,0,1)\})$ of $\bR^3$, and let
$y_i :  =  (1,0,0) - \left(\cos \frac{1}{i}, \sin \frac{1}{i},0\right) \! \stackrel{{\scriptstyle i \to \infty}}{\longrightarrow} \! y_0 = (0,0,0)$.
Then $L  \cap  (K + y_i) = \{(1,0,0)\}$ for any  $i$, and $L \cap  (K + y_0) = L \cap K = [(1,0,0), (0,0,1)]$. Hence
$L \cap (K + y_i)$ does not converge to $L \cap (K + y_0)$ in the described way.
\end{remark}

\begin{theorem}\label{theo1}
Let $C_1 \subseteq C_2 \subseteq \dots$ be an increasing sequence of b-convex bodies in a Minkowski space $(\bR^n, \| \cdot \|)$ such that
\begin{equation}\label{eq-assumption_bh}
\dim \left({\rm aff}
\left({\rm bh}_1 \left(\bigcup^\infty_{i=1}C_i\right)\right)\right) \in \{0,1, n-1,n\}\,.
\end{equation}
Then
\begin{equation}\label{(4)}
{\rm cl} \left(\bigcup^\infty_{i=1} C_i\right) =
{\rm bh}_1 \left(\bigcup^\infty_{i=1} C_i\right)\,.
\end{equation}
In particular, ${\rm cl}\left(\bigcup^\infty_{i=1} C_i\right)$
is a b-convex body.
\end{theorem}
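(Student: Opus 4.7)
Let $U := \bigcup_{i=1}^\infty C_i$ and $K := \BH(U)$. By Lemma~\ref{lem_basics}\ref{bh_bhbh}, $\CL(U) \subseteq K$ already holds, so only $K \subseteq \CL(U)$ needs to be established; the ``in particular'' assertion will then follow, since $K = \BH(U)$ is either a b-convex body or equal to $\bR^n$. I argue by contradiction: suppose $x_0 \in K \setminus \CL(U)$. Then $x_0 \notin C_i$ for every $i$, so Proposition~\ref{prop_separation}\ref{sep_b} applied to the b-convex body $C_i$ provides centres $y_i$ with $C_i \subseteq B(y_i,1)$ and $x_0 \notin B(y_i,1)$. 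Fixing any $c \in C_1$ one has $\|y_i - c\| \le 1$, so $(y_i)$ is bounded; passing to a convergent subsequence $y_i \to y_0$ yields $U \subseteq B(y_0,1)$. This immediately handles the sub-case $K = \bR^n$, as it contradicts $\BH(U) = \bR^n$. In the remaining cases, combined with $x_0 \in K \subseteq B(y_0,1)$ and $\|x_0 - y_0\| \ge 1$, the conclusion is that $x_0$ lies in the exposed b-face $F := K \cap S(y_0,1)$.

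The task then reduces to showing $F \subseteq \CL(U)$, with the help of the non-empty exposed b-faces $F_i := C_i \cap S(y_i,1) \subseteq U$. The idea is to establish Hausdorff convergence $F_i \to F$ so that $x_0 \in F$ appears as an accumulation point of $\bigcup_i F_i$, contradicting $x_0 \notin \CL(U)$. In the trivial case $\DIM \AFF K = 0$, $K$ is a singleton and $U = K$; in the case $\DIM \AFF K = 1$, $K$ is a segment on a line and $F$ consists of at most two endpoints, each of which is the limit of the corresponding endpoint of the nested bodies $C_i$. For the essential cases $\DIM \AFF K \in \{n-1, n\}$ the plan is to apply Lemma~\ref{lemma1} with the convex body $B(o,1)$ and translation vectors $y_i \to y_0$, slicing by a hyperplane $H$: when $\DIM \AFF K = n-1$, take $H = \AFF K$, so that both $C_i \subseteq H$ and $K \subseteq H$, hence $F_i$ and $F$ are automatically captured by the slices $H \cap B(y_i,1)$ and $H \cap B(y_0,1)$; when $\DIM \AFF K = n$, choose a hyperplane $H$ related to a supporting hyperplane at $x_0$ so as to capture enough of $F$ and $F_i$. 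Lemma~\ref{lemma1} then yields $H \cap B(y_i,1) \to H \cap B(y_0,1)$ in Hausdorff distance, and a limit argument delivers $x_0 \in \CL(U)$.

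The main obstacle I anticipate is the full-dimensional case $\DIM \AFF K = n$: a natural choice of $H$ as a tangent hyperplane to $B(y_0,1)$ at $x_0$ may fail to intersect $B(y_i,1)$ for $i$ large (violating the non-emptiness hypothesis of Lemma~\ref{lemma1}), so one must choose $H$ more carefully (possibly with a slight perturbation, or by tracking supporting hyperplanes at points of each $F_i$ instead) and verify uniformly that $H \cap (B(o,1) + y_i) \ne \emptyset$ along the entire subsequence. The dimension restriction~\eqref{eq-assumption_bh} is indispensable: Remark~\ref{rem1} exhibits a compact convex body and an intermediate-dimensional affine subspace for which the slicing continuity of Lemma~\ref{lemma1} fails, so any hyperplane-based convergence argument is structurally blocked when $\AFF K$ has intermediate dimension. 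This is the geometric reason behind the exclusion of the dimensions $2, \ldots, n-2$ from~\eqref{eq-assumption_bh}.
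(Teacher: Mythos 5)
Your setup coincides with the paper's: assume a point $x_0\in\BH\bigl(\bigcup_i C_i\bigr)\setminus\CL\bigl(\bigcup_i C_i\bigr)$, separate it from each $C_i$ by Proposition~\ref{prop_separation}\ref{sep_b}, pass to a convergent subsequence $y_{i_j}\to y_0$, and conclude $\bigcup_i C_i\subseteq B(y_0,1)$ with $x_0\in S(y_0,1)$. From there, however, your plan has a genuine gap. You propose to reach a contradiction by proving Hausdorff convergence of the exposed b-faces $F_i=C_i\cap S(y_i,1)$ to $F=K\cap S(y_0,1)$, so that $x_0\in F$ becomes an accumulation point of $\bigcup_i F_i\subseteq\bigcup_i C_i$. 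This convergence is nowhere established and is not a consequence of Lemma~\ref{lemma1}: knowing $H\cap B(y_{i_j},1)\to H\cap B(y_0,1)$ says nothing about the sets $C_i\cap S(y_i,1)$, and support sets of convex bodies are notoriously discontinuous under Hausdorff limits (they can jump up in the limit), so even the lower half of the convergence you need ($x_0=\lim t_j$ with $t_j\in F_{i_j}$) is not available. You yourself flag the full-dimensional case as an unresolved ``main obstacle,'' so the argument is incomplete precisely where the theorem has content.

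The paper's proof avoids faces entirely by one extra move that your proposal is missing: since $\bigcup_i C_i$ misses a whole ball $B(x_0,\varepsilon_0)$, one may replace $x_0$ by a point $x_1\in\operatorname{relint}\bigl(\BH\bigl(\bigcup_i C_i\bigr)\bigr)\setminus\bigcup_i C_i$ and run the separation argument for $x_1$ instead. The limit then gives $x_1\notin\INT(B(y_0,1))$ together with $\BH\bigl(\bigcup_i C_i\bigr)\subseteq B(y_0,1)$, which already contradicts $x_1\in\operatorname{relint}$ when $\DIM(\AFF(\BH(\bigcup_i C_i)))=n$ --- no hyperplane and no Lemma~\ref{lemma1} are needed in that case, which dissolves your anticipated obstacle. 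In the codimension-one case one takes $H=\AFF\bigl(\BH\bigl(\bigcup_i C_i\bigr)\bigr)$ and uses Lemma~\ref{lemma1} only to transfer ``$x_1\notin B(y_{i_j},1)$'' into ``$x_1\notin\INT_H(B(y_0,1)\cap H)$,'' again contradicting $x_1\in\operatorname{relint}$. Your diagnosis of why \eqref{eq-assumption_bh} is needed (failure of sectional continuity in intermediate dimensions, Remark~\ref{rem1}) is correct, and your dimension~$0$ and~$1$ cases are essentially fine, but to complete the proof you should abandon the face-convergence route and work with a relative-interior witness point.
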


\begin{proof}
Since ``$\subseteq$'' in (\ref{(4)}) is obvious, we prove now
``$\supseteq$''.

\textbf{Case 1:} For $\dim({\rm aff}({\rm bh}_1 (\bigcup^\infty_{i=1} C_i ))) = 0$,
$\bigcup^\infty_{i=1} C_i = \{x_0\}$ is a singleton. Thus
$C_i = \{x_0\}$ for all $i = 1,2,\dots$, and (\ref{(4)}) is trivial.

\textbf{Case 2:} For $\dim ({\rm aff}({\rm bh}_1 (\bigcup^\infty_{i=1} C_i))) = 1$,
bh$_1 (\bigcup^\infty_{i=1} C_i)$ is a line segment. Since every
closed line segment of the same direction and having smaller length is also b-convex,
each $C_i$ is a closed segment of that kind (perhaps of length $0$), $\bigcup^\infty_{i=1} C_i$ is a segment of that direction (not necessarily closed), and
$$
{\rm bh}_1 \left(\bigcup^\infty_{i=1} C_i\right) = {\rm  cl} \left(\bigcup^\infty_{i=1} C_i\right)\,.
$$

\textbf{Case 3: $\dim ({\rm aff}({\rm bh}_1 (\bigcup^\infty_{i=1} C_i))) \ge n-1 > 0$.}

Assume that we have ``$\not\supseteq$'' in (\ref{(4)}). Then there exists $x_0 \in {\rm bh}_1 (\bigcup^\infty_{i=1} C_i) \setminus {\rm cl}(\bigcup^\infty_{i=1} C_1)$, and we find
$\varepsilon_0 > 0$ such that $B(x_0,\varepsilon_0) \cap \bigcup^\infty_{i=1} C_i = \emptyset$. Consequently, there exists $x_1 \in {\rm relint} ({\rm bh}_1(\bigcup^\infty_{i=1} C_i))
\setminus (\bigcup^\infty_{i=1} C_i)$, i.e.,
\begin{equation}\label{(5)}
x_1 \in \, {\rm relint}\left({\rm bh}_1\left(\bigcup^\infty_{i=1}C_i\right)\right)
\end{equation}
and
\begin{equation}\label{(6)}
x_1 \notin C_i \, \mbox{ for } \, i = 1,2,\dots
\end{equation}

Property (\ref{(6)}) and Proposition~\ref{prop_separation}\ref{sep_b} give $y_i \in \bR^n$ such that
$$
x_1 \notin B(y_i,1) \, \mbox{ and } \, C_i \subseteq B(y_i,1) \, \mbox{ for } \, i = 1,2,\dots
$$
There exists a convergent subsequence $y_{i_j} \! \stackrel{{\scriptstyle j \to \infty}}{\longrightarrow} \! y_0 \in \bR^n$,
and, by continuity of the norm and the inclusions $C_1 \subseteq C_2 \subseteq \dots$, we have
\begin{equation}\label{(7)}
x_1 \notin {\rm int} (B(y_0,1)) \, \mbox{ and } \, \bigcup^\infty_{i=1} C_i \subseteq B(y_0,1)\,.
\end{equation}

\textbf{Subcase 3.1: $\dim ({\rm aff} ({\rm bh}_1 (\bigcup^\infty_{i=1} C_i))) = n$.}

With
(\ref{(7)}) we have ${\rm bh}_1 (\bigcup^\infty_{i=1} C_i) \subseteq B(y_0,1)$ and
$$
x_1 \notin {\rm int}\left({\rm bh}_1\left(\bigcup^\infty_{i=1} C_i\right)\right) \stackrel{{\rm (Subcase~3.1)}}{=} {\rm relint}\left({\rm bh}_1\left(\bigcup^\infty_{i=1} C_i\right)\right)\,,
$$
a contradiction to (\ref{(5)}).

\textbf{Subcase 3.2: $\dim ({\rm aff}({\rm bh}_1(\bigcup^\infty_{i=1} C_i))) = n-1$.}

Let $H := {\rm aff} ({\rm bh}_1 (\bigcup^\infty_{i=1} C_i))$.
From $y_{i_j} \! \stackrel{{\scriptstyle j \to \infty}}{\longrightarrow} \! y_0$ we get $B(y_{i_j},1) \! \stackrel{{\scriptstyle j \to \infty}}{\longrightarrow} \! B(y_0,1)$ in the Hausdorff metric, and with Lemma~\ref{lemma1} we get
$B(y_{i_j},1) \cap H \! \stackrel{{\scriptstyle j \to \infty}}{\longrightarrow} \!B(y_0,1) \cap H$ as well. Then, by $x_1 \notin B(y_{i_j},1)$, we obtain
$x_1 \notin {\rm int}_H(B(y_0,1) \cap H)$ (where ${\rm int}_H(\cdot)$ is the interior in the natural topology of $H$), whereas bh$_1 (\bigcup^\infty_{i=1} C_i)
\subseteq B(y_0,1) \cap H$ by (\ref{(7)}) and the choice of $H$. With this and the choice of $H$ we obtain
$x_1 \notin {\rm int}_H ({\rm bh}_1 (\bigcup^\infty_{i=1} C_i)) = {\rm relint } ({\rm bh}_1(\bigcup^\infty_{i=1} C_i))$, a contradiction to (\ref{(5)}). The
proof of ``$\supseteq$'' in (\ref{(4)}) is complete.

To show that cl$(\bigcup^\infty_{i=1} C_i) = {\rm bh}_1 (\bigcup^\infty_{i=1} C_i)$ is a b-convex body, it is enough to verify that
bh$_1 (\bigcup^\infty_{i=1} C_i) \not= \bR^n$, i.e., that $\bigcup^\infty_{i=1} C_i$ is contained in some ball of radius $1$. This is obvious by the second part of (\ref{(7)}) (which can be shown analogously in Cases 1 and 2).
\end{proof}

\begin{remark}\label{rem2}
Note that the technical assumption \eqref{eq-assumption_bh} is satisfied in each of the following situations:
\begin{enumerate}
\item[(i)] $n \le 3$,
\item[(ii)] $(\bR^n,\| \cdot \|)$ is strictly convex,
\item[(iii)] $\dim({\rm aff}(\bigcup^\infty_{i=1} C_i)) \ge n-1$ or, equivalently, $\dim({\rm aff}(C_{i_0})) \ge n-1$ for some $i_0$.
\end{enumerate}
\end{remark}

\begin{proof}
Situation (i) is trivial.
In situation (ii), Proposition 2 ((i) $\Leftrightarrow$ (xvii)) shows that $\operatorname{dim}(\AFF({\rm bh}_1(\bigcup^\infty_{i=1} C_i)))=n$ as soon as $\bigcup^\infty_{i=1} C_i$ is not a singleton.
Condition (iii) implies \eqref{eq-assumption_bh}, because $\bigcup^\infty_{i=1} C_i \subseteq {\rm bh}_1(\bigcup^\infty_{i=1} C_i)$.
\end{proof}

In Example~\ref{ex-dim4} we shall see that assumption \eqref{eq-assumption_bh} cannot be dropped in Theorem~\ref{theo1}.

\begin{theorem}\label{theo2}
Let $S \not= \emptyset$ be a subset of a Minkowski space $(\bR^n,  \| \cdot \|)$ such that
\begin{equation}\label{(9)}
\dim ({\rm aff} ({\rm bh}_1 (S))) \in \{0,1,n-1,n\}\,.
\end{equation}
Then
\begin{equation}\label{(11a)}
{\rm bh}_1 (S) = {\rm cl} \left(\bigcup_{F \subseteq S \, {\rm finite}} {\rm bh}_1 (F)\right)\,.
\end{equation}
\end{theorem}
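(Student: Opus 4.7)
The easy direction ``$\supseteq$'' in (\ref{(11a)}) is immediate from Lemma~\ref{lem_basics}\ref{bh_inclusion} and the closedness of $\BH(S)$. For ``$\subseteq$'', my plan is to split into two cases depending on whether $S$ is contained in some closed unit ball.

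If $\BH(S)=\bR^n$, the family of compact sets $\{B(y,1):y\in S\}$ has empty total intersection, so the finite intersection property supplies a finite $F_0\subseteq S$ with $\bigcap_{y\in F_0}B(y,1)=\emptyset$; hence $\BH(F_0)=\bR^n=\BH(S)$ and the right-hand side of (\ref{(11a)}) is already $\bR^n$.

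In the main case $\BH(S)\ne\bR^n$ the plan is to reduce to Theorem~\ref{theo1} via a countable dense subset. Concretely, using separability of $\bR^n$, I would pick $D=\{d_1,d_2,\ldots\}$ dense in $S$ and set $F_i:=\{d_1,\ldots,d_i\}$, $C_i:=\BH(F_i)$. Since $S\subseteq B(x_*,1)$ for some $x_*$, every $F_i$ lies in $B(x_*,1)$, so each $C_i$ is a b-convex body, and $(C_i)$ is increasing by Lemma~\ref{lem_basics}\ref{bh_inclusion}. The crucial identity to verify is $\BH(\bigcup_i C_i)=\BH(S)$: the inclusion ``$\subseteq$'' follows from $C_i\subseteq\BH(S)$ together with Lemma~\ref{lem_basics}\ref{bh_bhbh}, while ``$\supseteq$'' uses $\bigcup_i C_i\supseteq D$ and $\BH(D)=\BH(\CL(D))\supseteq\BH(S)$ (density of $D$ in $S$, plus Lemma~\ref{lem_basics}). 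With this identification the dimension hypothesis (\ref{(9)}) on $\BH(S)$ is exactly what Theorem~\ref{theo1} requires of $(C_i)$, so Theorem~\ref{theo1} yields $\CL(\bigcup_i C_i)=\BH(\bigcup_i C_i)=\BH(S)$. Because every $C_i$ is the ball hull of a finite subset of $S$, this concludes $\BH(S)\subseteq\CL(\bigcup_{F\subseteq S\text{ finite}}\BH(F))$.

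The main obstacle is transferring the abstract dimension assumption (\ref{(9)}) on $\BH(S)$ into the concrete hypothesis of Theorem~\ref{theo1} on a sequence of b-convex bodies; the density argument is the key reduction, since it replaces the uncountable family of all finite subsets of $S$ by a single countable increasing sequence with the same ball hull. The separate treatment of $\BH(S)=\bR^n$ is unavoidable here because in that case the $C_i$ can fail to be b-convex bodies, so Theorem~\ref{theo1} does not apply directly.
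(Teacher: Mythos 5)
Your proposal is correct and follows essentially the same route as the paper: both reduce to Theorem~\ref{theo1} by choosing a countable dense sequence in $S$, forming the increasing b-convex bodies $C_i=\BH(\{x_1,\dots,x_i\})$, verifying $\BH(\bigcup_i C_i)=\BH(S)$ so that hypothesis \eqref{(9)} transfers to \eqref{eq-assumption_bh}, and concluding $\BH(S)=\CL(\bigcup_i C_i)\subseteq\CL(\bigcup_{F}\BH(F))$. The only (harmless) difference is cosmetic: you dispose of the degenerate case $\BH(S)=\bR^n$ up front via the finite intersection property of the compact balls $B(y,1)$, $y\in S$, whereas the paper notes equivalently that if some $C_{i_0}$ fails to be a b-convex body then $C_{i_0}=\bR^n$ and both sides of \eqref{(11a)} are $\bR^n$.
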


\begin{proof}
The inclusion ``$\supseteq$'' is evident. For ``$\subseteq$'', let
$x_1, x_2,\ldots \in S$ be such that $S = {\rm cl} (\{x_1,x_2,\dots\})$.
Putting $C_i = {\rm bh}_1 (\{x_1,\dots, x_i\})$ for $i = 1,2,\dots$, we obtain $C_1 \subseteq C_2 \subseteq \dots$ If $C_{i_0}$ is not a b-convex body for some $i_0$, then ${\rm bh}_1(\{x_1,\dots,x_{i_0}\}) = C_{i_0} = \bR^n$, and (\ref{(11a)}) is obvious (both sides are $\bR^n$).
Hence we can assume that all $C_i$ are b-convex bodies. Moreover,
\begin{equation}\label{(11)}
\begin{array}{rl}
{\rm bh}_1 \left(\bigcup\limits^\infty_{i=1} C_i\right) \!\!\!& = \; {\rm bh}_1 \left(\bigcup\limits^\infty_{i=1} {\rm bh}_1(\{x_1,\dots,x_i\})\right) \;=\; {\rm bh}_1 \left(\bigcup\limits^\infty_{i=1} \{x_1,\dots,x_i\}\right)\\[3ex]
& = \; {\rm bh}_1 (\{x_1,x_2,\dots\}) \;=\; {\rm bh}_1 ({\rm cl}(\{x_1,x_2,\dots\})) \;=\; {\rm bh}_1(S)\,,
\end{array}
\end{equation}
which gives
$$
\dim\left({\rm aff}\left({\rm bh}_1\left(\bigcup^\infty_{i=1} C_i\right)\right)\right) = \dim ({\rm aff}({\rm bh}_1 (S))) \stackrel{\eqref{(9)}}{\in} \{0,1,n-1,n\}\,.
$$
Now we can apply Theorem \ref{theo1} and obtain
$$
\begin{array}{rcl}
{\rm bh}_1 (S) &\!\!\stackrel{\eqref{(11)}}{=}\!\!& {\rm bh}_1 \left(\bigcup \limits^\infty_{i=1} C_i\right) \;\stackrel{({\rm Theorem\; \ref{theo1}})}{=}\; {\rm cl} \left(\bigcup \limits^\infty_{i=1} C_i\right)
\;=\; {\rm cl} \left(\bigcup \limits^\infty_{i=1} {\rm bh}_1 (\{x_1,\dots,x_i\})\right)\\[3ex]
&\!\!\subseteq\!\!& {\rm cl} \left(\bigcup \limits_{F \subseteq S \,{\rm finite}} {\rm bh}_1 (F)\right)\,.
\end{array}
$$
\end{proof}

\begin{remark}\label{rem3}
As in Remark~\ref{rem2}, we see that (\ref{(9)}) holds in each of the following situations:
\begin{enumerate}
\item[(i)] $n \le 3$,
\item[(ii)] $(\bR^n, \| \cdot \|)$ is strictly convex,
\item[(iii)] $\dim({\rm aff}(S)) \ge n-1$.
\end{enumerate}
\end{remark}

The claim of Theorem~\ref{theo2} is shown in \cite[Theorem~1]{La-Na-Ta} under the stronger assumption that $\DIM(\AFF(\BH(S)))=n$. The authors ask in \cite[Problem~2.6]{La-Na-Ta} if the assumption can be dropped.
Our generalization to the additional cases $\DIM(\AFF(\BH(S))) \in \{0,1,n-1\}$ shows that the assumption from \cite{La-Na-Ta} can be weakened. However, the next example illustrates that the restrictions \eqref{eq-assumption_bh} in Theorem~\ref{theo1} and (\ref{(9)}) in Theorem~\ref{theo2} are essential, which shows that
the answer to Problem~2.6 from \cite{La-Na-Ta} is negative.

\begin{example}\label{ex-dim4}
We denote the Euclidean norm by $\|\cdot\|_2$ and consider convex bodies
\begin{eqnarray*}
K &=& \{(\kappa_1,\kappa_2,\kappa_3,0): \|(\kappa_1,\kappa_2)\|_2 \le 1,\|(\kappa_2,\kappa_3)\|_2 \le 1\},\\
L &=& \{(\lambda_1,0,\lambda_3,\lambda_4): \|(\lambda_1,\lambda_4)\|_2\le 1, |\lambda_3|\le 1\}
\end{eqnarray*}
in $\bR^4$. We define the unit ball $B(o,1)=B$ of a Minkowski space $(\bR^4,\|\cdot\|)$ by
\begin{eqnarray}
B &=& \CONV(K \cup L) \nonumber\\
&=& \{(\kappa_1+\lambda_1,\xi_2,\kappa_3+\lambda_3,\xi_4):\nonumber\\
&& \quad \max\{\|(\kappa_1,\xi_2)\|_2,\|(\xi_2,\kappa_3)\|_2\}+
\max\{\|(\lambda_1,\xi_4)\|_2, |\lambda_3|\} \le 1\}. \label{e-0}
\end{eqnarray}
For that space we shall see that
\begin{equation}\label{e-1}
\BH(\{(\alpha,0,0,0),(-\alpha,0,0,0)\})= [(\alpha,0,0,0),(-\alpha,0,0,0)] \;\mbox{ for }\; \alpha \in (0,1)
\end{equation}
and
\begin{equation}\label{e-2}
\BH(\{(1,0,0,0),(-1,0,0,0)\})= \{(\xi_1,0,0,\xi_4): \|(\xi_1,\xi_4)\|_2 \le 1\}.
\end{equation}

Consequently, the segments $C_i=\left[\left(1-\frac{1}{i},0,0,0\right),\left(-1+\frac{1}{i},0,0,0\right)\right]$, $i=1,2,\ldots$, form an increasing sequence of b-convex bodies, and we obtain
\begin{eqnarray*}
\CL\left(\bigcup_{i=1}^\infty C_i\right) &=& [(1,0,0,0),(-1,0,0,0)],\\
\BH\left(\bigcup_{i=1}^\infty C_i\right) &=& \{(\xi_1,0,0,\xi_4): \|(\xi_1,\xi_4)\|_2 \le 1\}.
\end{eqnarray*}
Hence \eqref{(4)} fails and $\CL(\bigcup_{i=1}^\infty C_i)$ is not b-convex.

Similarly, the relatively open segment $S=((1,0,0,0),(-1,0,0,0))$ satisfies
\begin{eqnarray*}
\BH(S) &=& \{(\xi_1,0,0,\xi_4): \|(\xi_1,\xi_4)\|_2 \le 1\},\\
\CL\left(\bigcup_{F \subseteq S \, {\rm finite}} {\rm bh}_1 (F)\right) &=& [(1,0,0,0),(-1,0,0,0)],
\end{eqnarray*}
and \eqref{(11a)} fails.
\end{example}

\begin{proof}[Proof of \eqref{e-1} and \eqref{e-2}]
\emph{Step 1. Verification of \eqref{e-1}. } Let $\alpha \in (0,1)$ be fixed. We use the linear functional
$$
f(\xi_1,\xi_2,\xi_3,\xi_4)= \sqrt{1-\alpha^2}\, \xi_2 + \alpha \xi_3=
\left\langle \left(\sqrt{1-\alpha^2},\alpha\right),(\xi_2,\xi_3) \right\rangle
$$
of Euclidean norm $\|f\|_2=\left\|\left(\sqrt{1-\alpha^2},\alpha\right)\right\|_2=1$,
and we define related level and sublevel sets $f_{=\mu}$, $f_{\le \mu}$, $f_{\ge \mu}$ by
$$
f_{=/\le/\ge \mu}= \{x \in \bR^4: f(x)=/\le/\ge \mu\}.
$$
We obtain, partially based on the Cauchy--Schwarz inequality,
$$
K \subseteq f_{\ge -1} \cap f_{\le 1}, \quad L \subseteq f_{\ge -\alpha} \cap f_{\le \alpha}, \quad L \cap f_{=1}= \emptyset.
$$
These yield
\begin{equation}\label{ep-1}
B \subseteq f_{\ge -1} \cap f_{\le 1}
\end{equation}
and
\begin{equation}\label{ep-2}
B \cap f_{=1}= K \cap f_{=1}= \left[\left(\alpha,\sqrt{1-\alpha^2},\alpha,0\right),\left(-\alpha,\sqrt{1-\alpha^2},\alpha,0\right)\right].
\end{equation}

Next note that
$\left(\pm \alpha,\pm \sqrt{1-\alpha^2}, \pm\alpha,0\right) \in K \subseteq B$ for arbitrary choice of signs. This implies $(\pm \alpha,0,0,0) \in B+\left(0,\pm\sqrt{1-\alpha^2},\pm\alpha,0\right)$, in particular
\begin{eqnarray*}
\{(\alpha,0,0,0),(-\alpha,0,0,0)\} \hspace{-15ex} && \\
&\subseteq&
B\left(\left(0,\sqrt{1-\alpha^2},\alpha,0\right),1\right) \cap B\left(\left(0,-\sqrt{1-\alpha^2},-\alpha,0\right),1\right),
\end{eqnarray*}
and in turn
\begin{eqnarray*}
\BH(\{(\alpha,0,0,0),(-\alpha,0,0,0)\}) \hspace{-31ex} && \\ &\subseteq&\!\!\!
B\left(\left(0,\sqrt{1-\alpha^2},\alpha,0\right),1\right) \cap B\left(\left(0,-\sqrt{1-\alpha^2},-\alpha,0\right),1\right)\\
&\stackrel{\eqref{ep-1}}{=}&\!\!\!
\left((B \cap f_{\ge -1})+\left(0,\sqrt{1-\alpha^2},\alpha,0\right)\right) \cap
\left((B \cap f_{\le 1})+\left(0,-\sqrt{1-\alpha^2},-\alpha,0\right)\right)\\
&=&\!\!\!
\left(B+\left(0,\sqrt{1-\alpha^2},\alpha,0\right)\right) \cap f_{\ge 0} \cap \left(B+\left(0,-\sqrt{1-\alpha^2},-\alpha,0\right)\right) \cap f_{\le 0}\\
&\subseteq&\!\!\! \left(B+\left(0,-\sqrt{1-\alpha^2},-\alpha,0\right)\right) \cap f_{= 0}\\
&=&\!\!\!
(B \cap f_{=1})+\left(0,-\sqrt{1-\alpha^2},-\alpha,0\right)\\
&\stackrel{\eqref{ep-2}}{=}&\!\!\! [(\alpha,0,0,0),(-\alpha,0,0,0)].
\end{eqnarray*}
This gives the inclusion ``$\subseteq$'' in \eqref{e-1}. The reverse inclusion is obvious, since the ball hull is closed and convex.

\emph{Step 2. Verification of the equivalence of
\begin{equation}\label{step2-left}
\{(1,0,0,0),(-1,0,0,0)\} \subseteq B(t,1)= B+t
\end{equation}
with
\begin{equation}\label{step2-right}
t=(0,0,\tau_3,0) \quad\mbox{ with }\quad \tau_3 \in [-1,1].
\end{equation}}

Suppose that $t=(\tau_1,\tau_2,\tau_3,\tau_4)$ satisfies \eqref{step2-left}. The inclusion $B = \CONV(K \cup L) \subseteq \CONV([-1,1]^4)=[-1,1]^4$ together with \eqref{step2-left} gives
\begin{equation}\label{step2-1}
\tau_1=0 \quad\mbox{ and }\quad \tau_3 \in [-1,1].
\end{equation}
Now the assumption $(-1,0,0,0) \in B+t$ amounts to $(-1,-\tau_2,-\tau_3,-\tau_4) \in B$ and, by symmetry, to $(1,\tau_2,\tau_3,\tau_4) \in B$. By \eqref{e-0}, this says that there are $\kappa_1,\lambda_1,\kappa_3,\lambda_3 \in \bR$ such that
$\kappa_1+\lambda_1=1$, $\kappa_3+\lambda_3=\tau_3$ and
\begin{equation}\label{step2-2}
\max\{\|(\kappa_1,\tau_2)\|_2,\|(\tau_2,\kappa_3)\|_2\}+
\max\{\|(\lambda_1,\tau_4)\|_2, |\lambda_3|\} \le 1.
\end{equation}
From $\kappa_1+\lambda_1=1$ we obtain
$$
1 \le |\kappa_1|+|\lambda_1|
\le \|(\kappa_1,\tau_2)\|_2+\|(\lambda_1,\tau_4)\|_2 \stackrel{\eqref{step2-2}}{\le} 1.
$$
Hence all inequalities in the last formula are identities and
$$
\tau_2=\tau_4=0.
$$
By \eqref{step2-1}, the implication ``\eqref{step2-left}$\Rightarrow$\eqref{step2-right}'' is proved.

The converse ``\eqref{step2-right}$\Rightarrow$\eqref{step2-left}''
amounts to $(\pm 1,0,-\tau_3,0) \in B$ for all $\tau_3 \in [-1,1]$. This is an obvious consequence of $(\pm 1,0,-\tau_3,0) \in L \subseteq B$.

Note that the equivalence ``\eqref{step2-left}$\Leftrightarrow$\eqref{step2-right}'' implies
\begin{equation}\label{step2-end}
\BH(\{(1,0,0,0),(-1,0,0,0)\})= \bigcap_{\tau_3 \in [-1,1]} (B+(0,0,\tau_3,0)).
\end{equation}

\emph{Step 3. Verification of ``$\subseteq$'' from \eqref{e-2}. }
Here the functional
$$
g(\xi_1,\xi_2,\xi_3,\xi_4)=\xi_3
$$
satisfies $K \cup L \subseteq g_{\ge -1} \cap g_{\le 1}$. Hence
\begin{equation}\label{step3-1}
B \subseteq g_{\ge -1} \cap g_{\le 1}
\end{equation}
and
\begin{eqnarray}
B \cap g_{=1} &=& \CONV((K \cap g_{=1}) \cup (L \cap g_{=1})) \nonumber\\
&=&\CONV(\{(\xi_1,0,1,0): |\xi_1|\le 1\} \cup \{\xi_1,0,1,\xi_4): \|(\xi_1,\xi_4)\|_2 \le 1\}) \nonumber\\
&=&\{\xi_1,0,1,\xi_4): \|(\xi_1,\xi_4)\|_2 \le 1\}. \label{step3-2}
\end{eqnarray}
Now we obtain the claim ``$\subseteq$'' from \eqref{e-2} by
\begin{eqnarray*}
\BH(\{(1,0,0,0),(-1,0,0,0)\}) \hspace{-28ex} && \\
&\stackrel{\eqref{step2-end}}{\subseteq}&
(B+(0,0,1,0)) \cap (B+(0,0,-1,0))\\
&\stackrel{\eqref{step3-1}}{=}&((B \cap g_{\ge -1})+(0,0,1,0)) \cap
((B \cap g_{\le 1})+(0,0,-1,0))\\
&=& (B+(0,0,1,0)) \cap g_{\ge 0} \cap (B+(0,0,-1,0)) \cap g_{\le 0}\\
&\subseteq& (B+(0,0,-1,0)) \cap g_{= 0}\\
&=& (B \cap g_{=1})+(0,0,-1,0)\\
&\stackrel{\eqref{step3-2}}{=}& \{(\xi_1,0,0,\xi_4): \|(\xi_1,\xi_4)\|_2 \le 1\}.
\end{eqnarray*}

\emph{Step 4. Verification of ``$\supseteq$'' from \eqref{e-2}. }
Let $\xi_1,\xi_4,\tau_3 \in \bR$ be such that $\|(\xi_1,\xi_4)\|_2 \le 1$ and $|\tau_3|\le 1$. Then $(\xi_1,0,-\tau_3,\xi_4) \in L \subseteq B$. Thus
$$
(\xi_1,0,0,\xi_4) \in B+(0,0,\tau_3,0) \quad\mbox{ if }\quad \|(\xi_1,\xi_4)\|_2 \le 1,\,|\tau_3|\le 1.
$$
By \eqref{step2-end}, this implies ``$\supseteq$'' from \eqref{e-2}.
\end{proof}

%%%%%%%%%%%%%%%%%%%%%%%%%%%%%%%%%%%%%%%%%%%%%%%%%%%%%%%%%%%%%%%%%%%%%%%%%%%

\section{Minimal representation of ball convex bodies as ball hulls}

In this section we will present, as announced, minimal representations of ball convex bodies in terms oft their ball exposed faces.

\begin{theorem}
\label{thm_meeting_F}
Let $K$ be a b-bounded b-convex body in a Minkowski space $(\mathbb{R}^n,\|\cdot\|)$ and let $S \subseteq K$. Then $\BH(S)=K$ if and only if every exposed b-face of $K$ meets $\CL(S)$.
\end{theorem}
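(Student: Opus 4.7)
The plan is to prove both implications by contradiction, with the harder direction relying on a sliding-ball argument combined with the open-interior separation of Proposition~\ref{prop_separation}\ref{sep_c}.

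For the forward direction, assume $\BH(S) = K$ and suppose some exposed b-face $F = K \cap S(y,1)$ of $K$ is disjoint from $\CL(S)$. Since $\CL(S) \subseteq K \subseteq B(y,1)$ and $\CL(S) \cap F = \emptyset$, the (compact) set $\CL(S)$ lies in $\INT(B(y,1))$, so Lemma~\ref{lem_basics}\ref{rad_interior} yields $r \in (0,1)$ with $\CL(S) \subseteq B(y,r)$. Picking any $x_0 \in F$ and setting $y' = y + t(y - x_0)$ for $0 < t < 1-r$, the identity $y' - x_0 = (1+t)(y - x_0)$ gives $\|x_0 - y'\| = 1 + t > 1$, so $x_0 \notin B(y',1)$; on the other hand, the triangle inequality gives $\|x - y'\| \le r + t < 1$ for every $x \in \CL(S)$, so $S \subseteq B(y',1)$. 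Hence $\BH(S) \subseteq B(y',1)$, contradicting $x_0 \in K = \BH(S)$.

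For the backward direction, $\BH(S) \subseteq K$ follows from Lemma~\ref{lem_basics}\ref{bh_inclusion} since $K$ is b-convex. Assume there exists $x_0 \in K \setminus \BH(S)$. Because $S \subseteq K$ and $K$ is b-bounded, $\BH(S)$ is b-bounded by Lemma~\ref{lem_basics}\ref{rad_bh}, so Proposition~\ref{prop_separation}\ref{sep_c} supplies $y_0$ with $\BH(S) \subseteq \INT(B(y_0,1))$ and $x_0 \notin B(y_0,1)$. Using the b-boundedness of $K$, also pick $z$ with $K \subseteq \INT(B(z,1))$. Two applications of Lemma~\ref{lem_basics}\ref{rad_interior} produce a common $r' \in (0,1)$ such that $\BH(S) \subseteq B(y_0,r') \cap B(z,r')$ and $K \subseteq B(z,r')$.

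Now slide the center along $y_t = (1-t)y_0 + tz$, $t \in [0,1]$. For each $x \in \BH(S)$, convexity of the norm gives $\|x - y_t\| \le (1-t)\|x - y_0\| + t\|x - z\| \le r' < 1$, so $\BH(S) \subseteq \INT(B(y_t,1))$ for every $t$. Meanwhile, $f(t) = \max_{x \in K} \|x - y_t\|$ is continuous with $f(0) \ge \|x_0 - y_0\| > 1$ and $f(1) \le r' < 1$, so the intermediate value theorem yields $t^* \in (0,1)$ with $f(t^*) = 1$. This means $K \subseteq B(y_{t^*},1)$ while the maximum is attained, so $F^* = K \cap S(y_{t^*},1)$ is a nonempty exposed b-face of $K$. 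But $\CL(S) \subseteq \BH(S) \subseteq \INT(B(y_{t^*},1))$ forces $F^* \cap \CL(S) \subseteq S(y_{t^*},1) \cap \INT(B(y_{t^*},1)) = \emptyset$, contradicting the hypothesis.

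The main obstacle is guaranteeing that $\CL(S)$ stays \emph{strictly} inside the sliding ball at the critical value $t^*$ where $B(y_{t^*},1)$ first touches $K$; if $\CL(S)$ could drift out to the supporting sphere, the face $F^*$ could meet $\CL(S)$ and the contradiction would evaporate. This is exactly why the open-interior separation of Proposition~\ref{prop_separation}\ref{sep_c} (not merely the closed version in Proposition~\ref{prop_separation}\ref{sep_b}) must be used at $t=0$, and why the uniform radius $r' < 1$ from Lemma~\ref{lem_basics}\ref{rad_interior} is needed; the b-boundedness hypothesis on $K$ enters precisely to make both open containments available.
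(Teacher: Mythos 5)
Your proof is correct and follows essentially the same route as the paper: the forward direction pushes the supporting ball away from the chosen face exactly as in the paper's argument, and the backward direction is the same sliding-ball construction, moving the center from a separating position to one whose ball strictly contains $K$ and stopping where the ball first supports $K$. The only difference is cosmetic: where the paper uses the closed separation of Proposition~\ref{prop_separation}\ref{sep_b} and then rules out a common point of the critical face and $\CL(S)$ by a strict triangle-inequality estimate, you invoke the open separation of Proposition~\ref{prop_separation}\ref{sep_c} to keep $\CL(S)$ uniformly within radius $r'<1$ of every sliding center, which makes the disjointness of the face from $\CL(S)$ immediate.
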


\begin{proof}
For the proof of ``$\Rightarrow$'', suppose that there is an exposed b-face $F$ of $K$ such that $F \cap \CL(S)= \emptyset$. We have to show that $\BH(S) \ne K$. The b-face $F$ has a representation $F= K \cap S(y,1)$, where $S(y,1)$ is a supporting sphere of $K$. By $F \cap \CL(S)= \emptyset$, we obtain $\CL(S) \subseteq \INT(B(y,1))$ and, by Lemma~\ref{lem_basics}\ref{rad_interior}, $\CL(S) \subseteq B(y,r)$ for some $r < 1$. We fix $x_0 \in F$. Then $\|x_0-y\|=1$,
because $F \subseteq S(y,1)$, and $x_0 \notin B(y-(1-r)(x_0-y),1)$, since $\|x_0-(y-(1-r)(x_0-y))\|=(2-r)\|x_0-y\| > 1$. But $S \subseteq B(y,r) \subseteq B(y-(1-r)(x_0-y),1)$ by the triangle inequality.
Thus
$$
x_0 \notin B(y-(1-r)(x_0-y),1) \supseteq \BH(S) \quad\mbox{ and }\quad x_0 \in F \subseteq K,
$$
showing that $\BH(S) \ne K$.

For the converse implication ``$\Leftarrow$'', we suppose that $\BH(S) \ne K$ and will show that $\CL(S)$ misses at least one exposed b-face $F_0$ of $K$. Since $\BH(S) \ne K$ and $\BH(S) \subseteq K$ by Lemma~\ref{lem_basics}, there is $x_0 \in K \setminus \BH(S)$. By Proposition~\ref{prop_separation}\ref{sep_b}, we can separate $x_0$ from the b-convex body $\BH(S) \subseteq K$ by a sphere $S(y_0,1)$,
\begin{equation}
\label{eq_t1_1}
\CL(S) \subseteq \BH(S) \subseteq B(y_0,1) \quad\mbox{ and }\quad x_0 \notin B(y_0,1).
\end{equation}
The b-boundedness of $K$ gives $y_1 \in \mathbb{R}^n$ such that
\begin{equation}
\label{eq_t1_2}
\CL(S) \subseteq K \subseteq \INT(B(y_1,1)).
\end{equation}
We consider the balls $B_\lambda:=B(y_0+\lambda(y_1-y_0),1)$ for $\lambda \in [0,1]$. Then $K \not\subseteq B_0$ by \eqref{eq_t1_1}
and $K \subseteq B_1$ by \eqref{eq_t1_2}. Consequently, there exists
$$
\lambda_0= \min\{\lambda \in [0,1]: K \subseteq B_\lambda\} \in (0,1].
$$
(It is a consequence of the continuity of $\|\cdot\|$ that $\lambda_0$ is really attained as a minimum.)
By the definition of $\lambda_0$ and a compactness argument, the set $F_0=K \cap \BD(B_{\lambda_0})$ is non-empty, so that $S_{\lambda_0}:=\BD(B_{\lambda_0})$ is a supporting sphere of $K$ and $F_0$ is an exposed b-face.

Now it remains to show that $F_0 \cap \CL(S)= \emptyset$. Suppose that this is not the case; i.e., there exists $z_0 \in F_0 \cap \CL(S)$.
The inclusions \eqref{eq_t1_1} and \eqref{eq_t1_2} yield $\|z_0-y_0\| \le 1$ and $\|z_0-y_1\| < 1$. Finally, the inclusion $z_0 \in F_0 \subseteq S_{\lambda_0}=S(y_0+\lambda_0(y_1-y_0),1)$ gives
\begin{eqnarray*}
1 &=& \|z_0-(y_0+\lambda_0(y_1-y_0))\| \\
&=& \|\lambda_0(z_0-y_1)+(1-\lambda_0)(z_0-y_0)\| \\
&\le& \lambda_0 \|z_0-y_1\|+ (1-\lambda_0)\|z_0-y_0\| \\
&<& \lambda_0+(1-\lambda_0) \\
&=& 1.
\end{eqnarray*}
This contradiction completes the proof.
\end{proof}

Note that the proof of ``$\Rightarrow$'' did not require b-boundedness of $K$. However, b-boundedness is essential for ``$\Leftarrow$''. To see this, consider a closed ball $K=B(y,1)$
of radius $1$. (Proposition~\ref{prop_str_conv}\ref{p_i}$\Rightarrow$\ref{p_vi} says that these are the only b-convex bodies that are not b-bounded, provided that the norm $\|\cdot\|$ is strictly convex.) Then the only supporting sphere of $K$ is $S(y,1)$, and the only exposed b-face is $F=K \cap S(y,1)= S(y,1)$. Then every singleton $S=\{x_0\} \subseteq S(y,1)$ satisfies the condition from Theorem~\ref{thm_meeting_F}, but $\BH(S)=\{x_0\}$ is not $K$.

\begin{example}
\label{ex_linfty}
Consider the space $l_\infty^2=(\mathbb{R}^2,\|\cdot\|_\infty)$ with unit ball $[-1,1]^2$. Then all b-convex bodies are of the form $[\alpha_1,\beta_1] \times [\alpha_2,\beta_2]$ with $0 \le \beta_i-\alpha_i \le 2$, $i=1,2$. We restrict our consideration to b-bounded b-convex bodies $K$ with non-empty interior. These are rectangles $K=[\alpha_1,\beta_1] \times [\alpha_2,\beta_2]$ with $0< \beta_i-\alpha_i < 2$, $i=1,2$. The exposed b-faces of $K$ are the edges
$$
F_1=[\alpha_1,\beta_1] \times \{\beta_2\}, F_2=\{\alpha_1\} \times [\alpha_2,\beta_2], F_3=[\alpha_1,\beta_1] \times \{\alpha_2\}, F_2=\{\beta_1\} \times [\alpha_2,\beta_2]
$$
and the unions $F_1 \cup F_2$, $F_2 \cup F_3$, $F_3 \cup F_4$, $F_4 \cup F_1$. Theorem~\ref{thm_meeting_F} says that a set $S \subseteq K$ satisfies $\BH(S)=K$ if and only if $\CL(S) \cap F_j \ne \emptyset$ for $j=1,2,3,4$. Consequently, when searching for
minimal sets $S$ (under inclusion) with $\BH(S)=K$, we need to
find a minimal set $S$ containing at least one point from each of $F_1,F_2,F_3,F_4$. Such $S$ may consist of $2$ (if $S$ is composed of two vertices symmetric with respect to the center of $K$), $3$ or $4$ points (if $S$ contains exactly one point from the relative interior of each $F_i$).

This example can be generalized for boxes in $l_\infty^n$, $n \ge 1$. Corresponding minimal sets must contain a point in every (classical) facet of a box and may consist of $2,\ldots,2n$ elements.
\end{example}

\begin{corollary}
\label{cor_necessary}
If a subset $S$ of a b-bounded b-convex body $K$ in a Minkowski space $(\mathbb{R}^n,\|\cdot\|)$ satisfies $\BH(S)=K$, then $\BEXP(K) \subseteq \CL(S)$.
\end{corollary}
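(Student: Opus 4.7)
The plan is to deduce this corollary as an immediate consequence of Theorem~\ref{thm_meeting_F}. First I would unpack the definition of $\BEXP(K)$: a b-exposed point $x_0$ of $K$ is precisely a point such that the singleton $\{x_0\}$ arises as an exposed b-face of $K$, i.e., there exists a supporting sphere $S(y_0,1)$ of $K$ with $K \cap S(y_0,1) = \{x_0\}$. So every element of $\BEXP(K)$ already comes equipped with a witnessing exposed b-face, namely the singleton it generates.

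Next I would invoke the hypothesis $\BH(S) = K$ together with the b-boundedness of $K$ to apply the ``only if'' direction of Theorem~\ref{thm_meeting_F}: every exposed b-face of $K$ must meet $\CL(S)$. Applied to the singleton exposed b-face $\{x_0\}$, this gives $\{x_0\} \cap \CL(S) \neq \emptyset$, which forces $x_0 \in \CL(S)$. Since $x_0 \in \BEXP(K)$ was arbitrary, we conclude $\BEXP(K) \subseteq \CL(S)$.

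There is essentially no obstacle here beyond correctly matching the singleton $\{x_0\}$ with the notion of exposed b-face as defined in Section~\ref{sec_definitions}. The only subtlety worth flagging is that b-boundedness of $K$ is needed to apply the ``$\Leftarrow$'' part of Theorem~\ref{thm_meeting_F} (the remark following that theorem points out that this hypothesis is indispensable), but it is given as part of the corollary's assumptions, so no additional work is required.
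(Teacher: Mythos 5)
Your proof is correct and essentially identical to the paper's: each $x_0 \in \BEXP(K)$ yields the singleton exposed b-face $\{x_0\}$, and the ``only if'' direction of Theorem~\ref{thm_meeting_F} forces $\{x_0\} \cap \CL(S) \neq \emptyset$, i.e.\ $x_0 \in \CL(S)$. One minor slip in your closing remark: the direction you actually invoke is ``$\Rightarrow$'', which (as the paper notes after Theorem~\ref{thm_meeting_F}) does \emph{not} require b-boundedness --- it is the converse ``$\Leftarrow$'' that does --- but since b-boundedness is part of the hypotheses anyway, this does not affect the argument.
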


\begin{proof}
If $x \in \BEXP(K)$, then $\{x\}$ is an exposed b-face of $K$. Now Theorem~\ref{thm_meeting_F} yields $\{x\} \cap \CL(S) \ne \emptyset$; i.e., $x \in \CL(S)$.
\end{proof}

\begin{theorem}
\label{thm_bh_bexp}
A subset $S$ of a b-bounded b-convex body $K$ in a strictly convex Minkowski space $(\mathbb{R}^n,\|\cdot\|)$ satisfies $\BH(S)=K$ if and only if $\BEXP(K) \subseteq \CL(S)$. In particular,
$$
K=\BH(\BEXP(K))\,,
$$
and $\CL(\BEXP(K))$ is the unique minimal (under inclusion) closed subset of $\mathbb{R}^n$ whose ball hull is $K$.
\end{theorem}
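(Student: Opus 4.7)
The forward direction is immediate from Corollary~\ref{cor_necessary}. For the converse, assume $\BEXP(K)\subseteq\CL(S)$. By Theorem~\ref{thm_meeting_F}, $\BH(S)=K$ is equivalent to every exposed b-face of $K$ meeting $\CL(S)$, so it suffices, since $\BEXP(K)\subseteq\CL(S)$, to establish the following.

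\emph{Key claim.} In a strictly convex Minkowski space, every non-empty exposed b-face of a b-bounded b-convex body $K$ contains a b-exposed point of $K$.

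My plan for the key claim is a successive tightening of supporting spheres. Fix $F=K\cap S(y_0,1)$ and let $Y:=\{y:K\subseteq B(y,1)\}$, a compact convex set. I consider exposed b-faces of $K$ contained in $F$, aiming to produce one that is a singleton. Starting from $F$, as long as the current exposed b-face $F_k=K\cap S(y_k,1)\subseteq F$ contains two distinct points $x_1,x_2$, strict convexity of $\|\cdot\|$ (Lemma~\ref{lem_str_conv}) places the midpoint of $x_1$ and $x_2$ strictly inside $B(y_k,1)$, and Proposition~\ref{prop_separation}\ref{sep_b} supplies a supporting sphere $S(y',1)$ of $K$ with $x_1\in S(y',1)$ and $x_2\in\INT(B(y',1))$. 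Blending $y_k$ with $y'$ via a short convex combination (staying inside $Y$ with the help of Lemma~\ref{lem_basics}\ref{rad_interior}) produces a new centre $y_{k+1}\in Y$ whose touching set with $K$ is still contained in $F$ but strictly smaller than $F_k$. A Zorn/limit passage on this shrinking family (using compactness of $Y$ and upper semicontinuity of $y\mapsto K\cap S(y,1)$) then yields a singleton exposed b-face $\{x^\ast\}\subseteq F$, so $x^\ast\in F\cap\BEXP(K)$.

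The main obstacles are (i) arranging the blending step so that the new touching set is a proper subset of the previous one and still contained in $F$, and (ii) closing the Zorn/limit passage cleanly, since an intersection of a decreasing chain of exposed b-faces need not itself be an exposed b-face of the required form. Strict convexity of $\|\cdot\|$, used through Lemma~\ref{lem_str_conv} and Lemma~\ref{lem_basics}\ref{rad_interior}, is what makes both steps work. Once the key claim is established, applying the theorem with $S=\BEXP(K)$ yields $K=\BH(\BEXP(K))$. For uniqueness, any closed $S'\subseteq\mathbb{R}^n$ with $\BH(S')=K$ satisfies $S'\subseteq K$ by Lemma~\ref{lem_basics}\ref{bh_bhbh} and $\BEXP(K)\subseteq S'$ by Corollary~\ref{cor_necessary}, so $\CL(\BEXP(K))\subseteq S'$; conversely $\BH(\CL(\BEXP(K)))=\BH(\BEXP(K))=K$ by Lemma~\ref{lem_basics}\ref{bh_bhbh}, making $\CL(\BEXP(K))$ the unique minimal closed set with ball hull $K$.
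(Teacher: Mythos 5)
Your reduction of the converse to the ``Key claim'' contains a fatal flaw: the Key claim is false. The paper's own example following this theorem (the Euclidean planar body $K$ bounded by the arcs $\Gamma_1,\dots,\Gamma_4$) is a b-bounded b-convex body in a strictly convex (indeed Euclidean) plane whose exposed b-face $\Gamma_1$ contains \emph{no} b-exposed point: $\BEXP(K)=\Gamma_3\cup\Gamma_4$ consists of two relatively open arcs, and the endpoints of $\Gamma_1$ are merely limits of b-exposed points, not b-exposed themselves (the boundary is $C^1$ there, so the unique supporting unit sphere at an endpoint of $\Gamma_1$ is the one whose touching set is all of $\Gamma_1$). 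What is true --- and would still suffice for your reduction via Theorem~\ref{thm_meeting_F}, since $\CL(S)$ is closed --- is that every exposed b-face meets $\CL(\BEXP(K))$ (Corollary~\ref{cor_t2_1}); but the paper obtains that only as a \emph{consequence} of the present theorem, so you cannot invoke it, and your tightening procedure, which aims to produce a singleton exposed b-face inside $F$, cannot succeed. The same example also breaks the Zorn/limit passage: singleton faces $K\cap S(y_k,1)$ with $y_k\to y_\infty$ can degenerate in the limit to the large face $\Gamma_1=K\cap S(y_\infty,1)$, so touching sets are not monotone under limits of centres.

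The paper's actual argument avoids the faces of $K$ altogether. Supposing $x_0\in K\setminus\BH(\BEXP(K))=:K\setminus\tilde K$, it separates $x_0$ from $\tilde K$ by a ball of radius $r<1$ (Proposition~\ref{prop_separation}\ref{sep_c}), slides that \emph{small} ball along a segment of centres until it first supports $K$, at some point $x_1\notin\tilde K$, and then applies Proposition~\ref{prop_str_conv}\ref{p_i}$\Rightarrow$\ref{p_xi}: in a strictly convex space a supporting unit sphere of a ball of radius $r<1$ meets that ball in exactly one point. Hence $K\cap S(y_2,1)=\{x_1\}$, so $x_1\in\BEXP(K)\setminus\tilde K$, a contradiction. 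This shrink-the-radius step is the essential idea your proposal is missing; without passing to balls of radius strictly less than $1$ one cannot manufacture genuine b-exposed points. Your treatment of the forward implication and of the uniqueness statement is correct.
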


\begin{proof}
The implication ``$\Rightarrow$'' of ``$\BH(S)=K \Leftrightarrow \BEXP(K) \subseteq \CL(S)$'' is given by Corollary~\ref{cor_necessary}.
To see ``$\Leftarrow$'', it is enough to show that
\begin{equation}
\label{eq_t2_goal}
K \subseteq \BH(\BEXP(K)).
\end{equation}
Indeed, if $\BEXP(K) \subseteq \CL(S)$ and if \eqref{eq_t2_goal} is verified, then Lemma~\ref{lem_basics}\ref{bh_inclusion} and \ref{bh_bhbh} give
$$
K \subseteq \BH(\BEXP(K)) \subseteq \BH(\CL(S)) = \BH(S) \subseteq \BH(K) = K,
$$
and ``$\Leftarrow$'' is proved.

Now, for showing \eqref{eq_t2_goal}, let us assume that there exists $x_0 \in K \setminus \BH(\BEXP(K))$. The b-boundedness of $K$ implies b-boundedness of the subset $\tilde{K}:=\BH(\BEXP(K)) \subseteq \BH(K)=K$. Separation of $x_0$ from $\tilde{K}$ by Proposition~\ref{prop_separation}\ref{sep_c} and the b-boundedness of $K$ yield the existence of $y_0,y_1 \in \mathbb{R}^n$ and $r \in (0,1)$ such that
\begin{eqnarray}
& \tilde{K} \subseteq B(y_0,r) \quad\mbox{ and }\quad x_0 \notin B(y_0,1), & \label{eq_t2_sep} \\
& \tilde{K} \subseteq K \subseteq \INT(B(y_1,r)). & \label{eq_t2_cover}
\end{eqnarray}
Similarly as in the proof of Theorem~\ref{thm_meeting_F}, we define $B_\lambda^r:= B(y_0+\lambda(y_1-y_0),r)$ for $\lambda \in [0,1]$ and, exploiting $K \not\subseteq B_0^r$ from \eqref{eq_t2_sep} and $K \subseteq B_1^r$ from \eqref{eq_t2_cover}, find
\begin{equation}
\label{eq_t2_lambda0}
\lambda_0=\min\{\lambda \in [0,1]: K \subseteq B_\lambda^r\} \in (0,1].
\end{equation}
Then there exists $x_1 \in K \cap S^r_{\lambda_0}$, where $S_{\lambda_0}^r:=\BD\left(B_{\lambda_0}^r\right)$.

Next we show that
\begin{equation}
\label{eq_t2_x1_notin}
x_1 \notin \tilde{K}.
\end{equation}
Indeed, if $x_1$ belonged to $\tilde{K}$, we had $\|x_1-y_0\| \le r$ and $\|x_1-y_1\|< r$. The inclusion $x_1 \in S_{\lambda_0}^r=S(y_0+\lambda_0(y_1-y_0),r)$ gave
\begin{eqnarray*}
r
&=&\|x_1-(y_0+\lambda_0(y_1-y_0))\| \\
&=& \|\lambda_0(x_1-y_1)+(1-\lambda_0)(x_1-y_0)\| \\
&\le& \lambda_0 \|x_1-y_1\|+ (1-\lambda_0)\|x_1-y_0\| \\
&<& \lambda_0 r+(1-\lambda_0)r \\
&=& r.
\end{eqnarray*}
This contradiction proves \eqref{eq_t2_x1_notin}.

Since $B_{\lambda_0}^r$ is a b-convex body by Lemma~\ref{lem_basics}\ref{ball_is_body} and since $x_1 \in S_{\lambda_0}^r=\BD(B_{\lambda_0}^r)$, Proposition~\ref{prop_separation}\ref{sep_a} gives $y_2 \in \mathbb{R}^n$ such that
$$
B_{\lambda_0}^r \subseteq B(y_2,1) \quad\mbox{ and }\quad x_1 \in B_{\lambda_0}^r \cap S(y_2,1).
$$
Proposition~\ref{prop_str_conv}\ref{p_i}$\Rightarrow$\ref{p_xi} tells us that
$$
B_{\lambda_0}^r \cap S(y_2,1)=\{x_1\},
$$
because $\|\cdot\|$ is strictly convex. Using the known inclusions $x_1 \in K$ and $K \subseteq B_{\lambda_0}^r$, we get
$$
K \subseteq B(y_2,1) \quad\mbox{ and }\quad K \cap S(y_2,1)=\{x_1\}.
$$
Hence $x_1 \in \BEXP(K)$. But \eqref{eq_t2_x1_notin} says that $x_1 \notin \BH(\BEXP(K))$. This final contradiction establishes \eqref{eq_t2_goal} and completes the proof.
\end{proof}

Theorem~\ref{thm_bh_bexp} says in particular that every b-bounded b-convex body in a strictly convex Minkowski space gives rise to a unique minimal closed subset whose ball hull is that body. We have seen in Example~\ref{ex_linfty} that this is not necessarily the case if the norm fails to be strictly convex.

\begin{example}
The set $\BEXP(K)$ of all b-exposed points of a b-bounded b-convex body is not necessarily closed. An example of that kind in the Euclidean plane is the convex disc $K$ bounded by the arcs
$$
\begin{array}{l}
\Gamma_1 = \left\{\left(\cos(\varphi),-\frac{\sqrt{3}}{4}+\sin(\varphi)\right): \frac{\pi}{3} \le \varphi \le \frac{2\pi}{3} \right\},\\[1.5ex]
\Gamma_2 = \left\{\left(\cos(\varphi),\frac{\sqrt{3}}{4}+\sin(\varphi)\right): \frac{4\pi}{3} \le \varphi \le \frac{5\pi}{3} \right\},\\[2ex]
\Gamma_3 = \left\{\left(\frac{1}{4}+\frac{1}{2}\cos(\varphi),\frac{1}{2}\sin(\varphi)\right): -\frac{\pi}{3} < \varphi < \frac{\pi}{3} \right\},\\[2ex]
\Gamma_4 = \left\{\left(-\frac{1}{4}+\frac{1}{2}\cos(\varphi),\frac{1}{2}\sin(\varphi)\right): \frac{2\pi}{3} < \varphi < \frac{4\pi}{3} \right\}.
\end{array}
$$
Thus, $K$ is a b-bounded b-convex body with $\BEXP(K)=\Gamma_3 \cup \Gamma_4$. Exposed b-faces that are no singletons are $\Gamma_1$ and $\Gamma_2$.
\end{example}

\begin{corollary}
\label{cor_t2_1}
If $K$ is a b-bounded b-convex body in a strictly convex Minkowski space $(\mathbb{R}^n,\|\cdot\|)$, then every exposed b-face of $K$ meets the closure of $\BEXP(K)$.
\end{corollary}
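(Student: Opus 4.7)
The plan is to obtain this as an immediate consequence of the two main theorems of this section, chained together. By Theorem~\ref{thm_bh_bexp}, strict convexity of the norm combined with b-boundedness of $K$ gives the representation $K = \BH(\BEXP(K))$. In other words, the set $S := \BEXP(K) \subseteq K$ satisfies $\BH(S) = K$.

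Now I would apply Theorem~\ref{thm_meeting_F} with this particular choice of $S$. Since $K$ is b-bounded and b-convex, and $\BH(S) = K$, the ``only if'' direction of Theorem~\ref{thm_meeting_F} says exactly that every exposed b-face of $K$ meets $\CL(S) = \CL(\BEXP(K))$, which is the claim.

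There is essentially no obstacle here: the whole work was done in establishing Theorem~\ref{thm_bh_bexp} (which needs strict convexity) and Theorem~\ref{thm_meeting_F} (which needs b-boundedness). One should just verify that the hypotheses of both theorems are met: strict convexity and b-boundedness are assumed in the corollary, and $\BEXP(K)$ trivially is a subset of $K$, so both theorems apply without any additional argument. The proof is therefore a single two-line chain of implications.
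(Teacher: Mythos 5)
Your proof is correct and is exactly the paper's argument: the paper also derives this corollary by combining Theorem~\ref{thm_bh_bexp} (giving $K=\BH(\BEXP(K))$) with the ``only if'' direction of Theorem~\ref{thm_meeting_F} applied to $S=\BEXP(K)$. The hypothesis checks you mention are the only content needed, and they are verified correctly.
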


\begin{proof}
This is a consequence of Theorems~\ref{thm_meeting_F} and \ref{thm_bh_bexp}.
\end{proof}

\begin{corollary}
\label{cor_t2_2}
If $S$ is a non-empty b-bounded subset of a strictly convex Minkowski space $(\mathbb{R}^n,\|\cdot\|)$, then $\BEXP(\BH(S)) \subseteq \CL(S)$.
\end{corollary}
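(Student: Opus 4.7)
The plan is to reduce the claim directly to Corollary~\ref{cor_necessary} by choosing the target body to be $K := \BH(S)$ itself. Since $S \subseteq \BH(S)$ by Lemma~\ref{lem_basics}\ref{bh_bhbh}, and since $\BH(S) = K$ holds tautologically, the only thing to verify before invoking Corollary~\ref{cor_necessary} is that $K$ qualifies as a b-bounded b-convex body.

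First I would check that $K = \BH(S)$ is b-convex: this is immediate from the idempotence $\BH(\BH(S)) = \BH(S)$ in Lemma~\ref{lem_basics}\ref{bh_bhbh}. It is non-empty since $\emptyset \ne S \subseteq K$. B-boundedness of $K$ follows from Lemma~\ref{lem_basics}\ref{rad_bh}, which guarantees $\RAD(\BH(S)) = \RAD(S) < 1$ whenever $S$ is b-bounded; in particular, $K$ is contained in a ball of radius strictly less than $1$, so $K \ne \bR^n$ and $K$ is an honest b-bounded b-convex body.

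Next I would apply Corollary~\ref{cor_necessary} to the pair $(S, K)$: its hypotheses ``$S \subseteq K$, $\BH(S) = K$, $K$ is a b-bounded b-convex body'' are all satisfied, and its conclusion is precisely $\BEXP(K) \subseteq \CL(S)$, i.e., $\BEXP(\BH(S)) \subseteq \CL(S)$.

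There is no real obstacle; all the real work was done in Theorem~\ref{thm_meeting_F}, from which Corollary~\ref{cor_necessary} is extracted. I note in passing that the strict convexity hypothesis stated in Corollary~\ref{cor_t2_2} is not actually needed for this particular inclusion---the relevant direction from Corollary~\ref{cor_necessary} is norm-agnostic. Strict convexity only becomes essential if one wishes to upgrade the inclusion to the equivalence ``$\BH(S) = K \Leftrightarrow \BEXP(K) \subseteq \CL(S)$'' of Theorem~\ref{thm_bh_bexp}, which is presumably the reason the hypothesis is stated: the corollary is meant to be read in tandem with Corollary~\ref{cor_t2_1} and Theorem~\ref{thm_bh_bexp}, where strict convexity genuinely intervenes.
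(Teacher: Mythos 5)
Your proof is correct and is essentially the paper's own argument: the paper likewise sets $K:=\BH(S)$, uses Lemma~\ref{lem_basics}\ref{rad_bh} and \ref{bh_bhbh} to see that $K$ is a b-bounded b-convex body containing $S$, and then cites Theorem~\ref{thm_bh_bexp}, whose relevant implication is precisely Corollary~\ref{cor_necessary}. Your side observation is also accurate --- since Corollary~\ref{cor_necessary} holds in an arbitrary Minkowski space, the stated inclusion does not in fact require strict convexity, which is only needed for the surrounding equivalences.
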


\begin{proof}
By Lemma~\ref{lem_basics}\ref{rad_bh} and \ref{bh_bhbh}, $K:=\BH(S)$ is a b-bounded b-convex body and $S$ is a subset of $K$. Now Theorem~\ref{thm_bh_bexp} says that $\CL(S) \supseteq \BEXP(K)=\BEXP(\BH(S))$.
\end{proof}

Example~\ref{ex_linfty} gives b-bounded b-convex bodies not having any b-exposed points. This shows that Theorem~\ref{thm_bh_bexp} and Corollary~\ref{cor_t2_1} fail in general if the underlying norm is not strictly convex.

A similar reason justifies the assumption of b-boundedness in Theorem~\ref{thm_bh_bexp} and Corollary~\ref{cor_t2_1}:

\begin{proposition}
\label{prop_bexp_empty}
If a b-convex body $K$ in a Minkowski space $(\mathbb{R}^n,\|\cdot\|)$ is not b-bounded, then $\BEXP(K)=\emptyset$.
\end{proposition}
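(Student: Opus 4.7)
The plan is to chain two observations: that a non-b-bounded b-convex body must have circumradius exactly $1$, and that every supporting sphere of such a body is then a circumsphere, so Lemma~\ref{lem_circumintersection} forces every exposed b-face to contain at least two points.

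First I would pin down the circumradius of $K$. By definition, $K$ is non-empty, bounded and b-convex. Since $K$ is a b-convex body, it is not $\bR^n$, so the defining intersection $K=\bigcap_{K\subseteq B(y,1)}B(y,1)$ is taken over a non-empty family; picking any such $y$ gives $K \subseteq B(y,1)$ and hence $\RAD(K) \le 1$. The hypothesis that $K$ is not b-bounded reads $\RAD(K) \ge 1$, so $\RAD(K)=1$. Also, $K$ is compact (closed as an intersection of balls, and bounded), so Lemma~\ref{lem_circumintersection} is applicable to it.

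Next I would use this to analyse supporting spheres. Let $S(y_0,1)$ be any supporting sphere of $K$, so $K \subseteq B(y_0,1)$ and $K \cap S(y_0,1) \ne \emptyset$. Because $\RAD(K)=1$, the enclosing ball $B(y_0,1)$ is a circumball of $K$. Lemma~\ref{lem_circumintersection} then yields
\[
\RAD\bigl(K \cap S(y_0,1)\bigr)=\RAD(K)=1,
\]
and in particular produces two points $x,x' \in K \cap S(y_0,1)$ with $\|x-x'\| \ge \tfrac{n+1}{n}>0$. Consequently, the exposed b-face $K \cap S(y_0,1)$ is never a singleton. Since every b-exposed point of $K$ would have to arise as such a singleton b-face, we conclude $\BEXP(K)=\emptyset$.

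There is no real obstacle here beyond verifying the two routine inequalities bounding $\RAD(K)$ from above and below; once $\RAD(K)=1$ is established, the identification of supporting spheres with circumspheres is immediate, and Lemma~\ref{lem_circumintersection} does the rest. The only thing worth being careful about is ruling out $K=\bR^n$ in step one so that the intersection defining $\BH(K)=K$ is non-empty.
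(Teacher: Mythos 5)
Your proof is correct and follows essentially the same route as the paper: establish $\RAD(K)=1$, observe that every supporting sphere then bounds a circumball, and invoke Lemma~\ref{lem_circumintersection} to see that no exposed b-face is a singleton. The only difference is that you spell out the routine verification of $\RAD(K)\le 1$, which the paper leaves implicit.
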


\begin{proof}
We have $\RAD(K)=1$, because $K$ is not b-bounded. Hence every supporting sphere $S(x,1)$ of $K$ is the boundary of a circumball $B(x,1)$. By Lemma~\ref{lem_circumintersection}, $|K \cap S(x,1)| \ge 2$. Hence none
of the exposed b-faces of $K$ is a singleton and $K$ has no b-exposed points.
\end{proof}

%%%%%%%%%%%%%%%%%%%%%%%%%%%%%%%%%%%%%%%%%%%%%%%%%%%%%%%%%%%%%%%%%%%%%%%%%%%

\section{An application to diametrically maximal sets}

A bounded non-empty set $C \subseteq \mathbb{R}^n$ is called \emph{complete} (or \emph{diametrically maximal}) if $\DIAM(C \cup \{x\})> \DIAM(C)$ for every $x \in \mathbb{R}^n \setminus C$; see \cite{meissner1911, jessen1929, eggleston1965, groemer1986}. Complete sets are necessarily convex bodies, and in the Euclidean case or for $n=2$ any complete set is of constant width. A complete set $C$ is called a \emph{completion} of a bounded non-empty set $S$ if $S \subseteq C$ and $\DIAM(C)=\DIAM(S)$. Zorn's lemma shows that every bounded non-empty subset of $\mathbb{R}^n$ has at least one completion. In $n$-dimensional
Minkowski spaces $(n \ge 3)$, the family of complete bodies can form a much richer class than that of bodies of constant width; see \cite{Mo-Sch1, Mo-Sch2} for recent contributions.

The following problem was posed in \cite[Section~4]{martini_et_al_2014}: \emph{Given a complete set $C \subseteq \mathbb{R}^n$, find all convex bodies $K_0 \subseteq C$ such that $C$ is the unique completion of $K_0$ and, moreover, there is no convex body $K \subseteq K_0$, $K \ne K_0$, such that $C$ is the unique completion of $K$.}

Without loss of generality, we can assume that $\DIAM(C)=1$. The following lemma summarizes particular relevant statements from the literature (for \ref{eggleston*} and \ref{eggleston}, see \cite[Section 1(E)]{eggleston1965}; for \ref{groemer} and \ref{groemer_proof}, see \cite[Theorem~5]{groemer1986} and the short proof given there).

\begin{lemma}
\label{lem_complete}
The following are satisfied in every Minkowski space $(\mathbb{R}^n,\|\cdot\|)$:
\begin{enumerate}[label={(\alph*)}]
\item A set $C \subseteq \mathbb{R}^n$ of diameter $1$ is complete if and only if, for every $x_1 \in \BD(C)$, there exists $x_2 \in \BD(C)$ such that $\|x_1-x_2\|=1$. \label{eggleston*}
\item A set $C \subseteq \mathbb{R}^n$ of diameter $1$ is complete if and only if $C=\bigcap_{x \in C} B(x,1)$. \label{eggleston}
\item A set $S \subseteq \mathbb{R}^n$ of diameter $1$ has a unique completion if and only if $\BH(S)$ is complete. \label{groemer}
\item If a set $S \subseteq \mathbb{R}^n$ of diameter $1$ has a unique completion $C$, then $C=\BH(S)$. \label{groemer_proof}
\end{enumerate}
\end{lemma}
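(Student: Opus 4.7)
This lemma consolidates four classical facts, and I would establish them in the order (a), (b), (d), (c), with each part feeding into the next.

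For part (a), direction ``$\Rightarrow$'' proceeds by contradiction: if some $x_1 \in \BD(C)$ has no partner $x_2 \in C$ at distance $1$, then compactness of $C$ and continuity of $x \mapsto \max_{y \in C}\|x-y\|$ show this maximum stays strictly below $1$ on a neighborhood of $x_1$. Choosing any point of that neighborhood outside $C$ yields a strict superset of $C$ whose diameter is still $1$, contradicting completeness. Direction ``$\Leftarrow$'' uses separation: for $y \notin C$, a supporting hyperplane separates $y$ from $C$, producing an ``opposite'' boundary point $x_1$; its prescribed partner $x_2$ satisfies $\|x_2-y\| > \|x_2-x_1\|=1$ by an elementary estimate along the line through $x_1$, so $\DIAM(C \cup \{y\}) > 1$.

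For part (b), the inclusion $C \subseteq \bigcap_{x \in C}B(x,1)$ is a direct consequence of $\DIAM(C) \le 1$. For the reverse, any $y$ in the intersection satisfies $\|y-x\| \le 1$ for every $x \in C$, whence $\DIAM(C \cup \{y\})=1$, forcing $y \in C$ by completeness. The converse implication (the equation implies completeness) is immediate: if $y \notin C$ and $C = \bigcap_{x \in C}B(x,1)$, then some $x \in C$ has $\|y-x\|>1$, so adding $y$ strictly increases the diameter.

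The central step for (d) is the inclusion $\BH(S) \subseteq C$, valid for every completion $C$ of $S$. Applying (b) to $C$ gives $C = \bigcap_{x \in C}B(x,1)$, and every $x \in C$ satisfies $S \subseteq C \subseteq B(x,1)$, so the centers indexing $C$ form a subfamily of those defining $\BH(S)$, yielding the inclusion. For the reverse inclusion $C \subseteq \BH(S)$ under the uniqueness hypothesis, pick $y \in C$ and any $z$ with $S \subseteq B(z,1)$; then $\DIAM(S \cup \{z\})=1$, so any completion of $S \cup \{z\}$ is also a completion of $S$ and hence equals $C$ by uniqueness, placing $z \in C$ and forcing $\|y-z\|\le \DIAM(C)=1$. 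Therefore $y$ lies in every ball of radius $1$ containing $S$, i.e.\ $y \in \BH(S)$, proving $C = \BH(S)$.

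Part (c) now follows quickly: if $\BH(S)$ is complete, then $\DIAM(\BH(S))=1$ (sandwiched between $S$ and any completion via the inclusion from (d)), so $\BH(S)$ is itself a completion; any other completion $C'$ would contain $\BH(S)$ with equal diameter and must coincide with it. Conversely, uniqueness of the completion $C$ yields $\BH(S)=C$ by (d), and $C$ is complete by hypothesis. The main technical obstacle is the chain of inclusions tying $\BH(S)$ to an arbitrary completion through (b); once the identification of centers is made, the uniqueness and existence arguments amount to bookkeeping with diameters and ball intersections.
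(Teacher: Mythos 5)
The paper itself gives no proof of this lemma; it simply cites Eggleston for (a)--(b) and Groemer for (c)--(d), so you are supplying arguments where the authors give references. Your treatments of (b), (d) and (c) are correct and essentially reproduce the standard arguments: (b) is a direct diameter computation, the inclusion $\BH(S)\subseteq C$ for every completion $C$ follows from (b), the device of completing $S\cup\{z\}$ for an arbitrary admissible center $z$ and invoking uniqueness is exactly the right way to obtain $C\subseteq\BH(S)$, and the derivation of (c) from (d) (using that a completion always exists, via Zorn's lemma as the paper records) is sound. The forward direction of (a) is also fine, granting that $C$ is a convex body, hence compact.

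The gap is in the direction ``$\Leftarrow$'' of (a). You separate $y\notin C$ from $C$ by a supporting hyperplane with touching point $x_1$, take a partner $x_2\in C$ with $\|x_1-x_2\|=1$, and assert $\|x_2-y\|>\|x_2-x_1\|$ ``by an elementary estimate along the line through $x_1$''. No such estimate is available: $y$, $x_1$, $x_2$ need not be collinear, and even in the Euclidean plane the inequality can fail for the partner the hypothesis hands you. Take $C$ to be the Reuleaux triangle with vertices $a=(0,0)$, $b=(1,0)$, $c=(1/2,\sqrt{3}/2)$, let $x_1=a$ with supporting hyperplane $\{\xi_1=0\}$, and let $y=(-\varepsilon,\sqrt{3}/2)$; then $x_2=c$ is a legitimate partner of $x_1$, yet $\|y-x_2\|=1/2+\varepsilon<1=\|x_1-x_2\|$. (Completeness is of course not violated---the other partner $b$ works---but your argument gives no control over which partner the hypothesis produces.) In a general Minkowski space there is moreover no inner product with which to run any ``obtuse angle'' estimate. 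The standard repair is to choose $x_1$ differently: fix $p\in\INT(C)$ (here one needs $C$ to be a convex body with nonempty interior, which should be made explicit), let $x_1$ be the point where the segment $[y,p]$ crosses $\BD(C)$, so $x_1=\mu y+(1-\mu)p$ with $\mu\in(0,1)$, and take any partner $x_2$ of $x_1$. Since $p\in\INT(C)$ forces $\|p-x_2\|<1$, convexity of the norm gives
$$
1=\|x_1-x_2\|\le\mu\|y-x_2\|+(1-\mu)\|p-x_2\|<\mu\|y-x_2\|+(1-\mu),
$$
hence $\|y-x_2\|>1$ and $\DIAM(C\cup\{y\})>1$, as required.
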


\begin{proposition}
\label{prop_dm}
Let $C$ be a complete set of diameter $1$ in a Minkowski space $(\mathbb{R}^n,\|\cdot\|)$, and let $K \subseteq C$ be a convex body. The following three conditions are equivalent:
\begin{enumerate}[label={(\Roman*)}]
\item $C$ is the unique completion of $K$. \label{dm_1}
\item $\BH(K)=C$. \label{dm_2}
\item $K$ meets every exposed b-face of $C$. \label{dm_3}
\end{enumerate}
If, in addition, $\|\cdot\|$ is strictly convex, then \ref{dm_1}, \ref{dm_2}, and \ref{dm_3} are equivalent to
\begin{enumerate}[label={(\Roman*)}]
\setcounter{enumi}{3}
\item $\CL(\CONV(\BEXP(C))) \subseteq K$. \label{dm_4}
\end{enumerate}
\end{proposition}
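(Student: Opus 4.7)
The plan is to cycle \ref{dm_1}$\Leftrightarrow$\ref{dm_2}$\Leftrightarrow$\ref{dm_3} using the results of Sections~3 and~6, and then attach \ref{dm_4} via the minimal-representation theorems in the strictly convex case. Two structural facts about $C$ will be used throughout: Lemma~\ref{lem_complete}\ref{eggleston} gives $C=\bigcap_{x\in C}B(x,1)$, so $C$ is b-convex, and \eqref{(1a)} with $\DIAM(C)=1$ gives $\RAD(C)\le\frac{n}{n+1}<1$, so $C$ is b-bounded. In particular, $C$ is a b-bounded b-convex body.

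With these facts, Theorem~\ref{thm_meeting_F} applied to the b-bounded b-convex body $C$ and its closed subset $K$ (with $\CL(K)=K$) yields \ref{dm_2}$\Leftrightarrow$\ref{dm_3} directly: $\BH(K)=C$ precisely when $K$ meets every exposed b-face of $C$. The equivalence \ref{dm_1}$\Leftrightarrow$\ref{dm_2} then follows from Lemma~\ref{lem_complete}, with one subtle point. The direction \ref{dm_1}$\Rightarrow$\ref{dm_2} is immediate from Lemma~\ref{lem_complete}\ref{groemer_proof}, since uniqueness of $C$ as a completion forces $\DIAM(K)=\DIAM(C)=1$ and identifies $C=\BH(K)$. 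For \ref{dm_2}$\Rightarrow$\ref{dm_1} I first need $\DIAM(K)=1$ before I can invoke Lemma~\ref{lem_complete}\ref{groemer}, and this is where having \ref{dm_3} available is crucial. Since $\BD(C)\ne\emptyset$ is covered by exposed b-faces, \ref{dm_3} permits picking some $k_0\in K\cap\BD(C)$; Lemma~\ref{lem_complete}\ref{eggleston*} then yields $k_0'\in\BD(C)$ with $\|k_0-k_0'\|=1$, and because $k_0\in C$ implies $C\subseteq B(k_0,1)$, the sphere $S(k_0,1)$ supports $C$ at $k_0'$, so $C\cap S(k_0,1)$ is an exposed b-face; one more application of \ref{dm_3} supplies $k_1\in K$ with $\|k_0-k_1\|=1$, whence $\DIAM(K)=1$. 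Lemma~\ref{lem_complete}\ref{groemer} now gives uniqueness of the completion of $K$, and Lemma~\ref{lem_complete}\ref{groemer_proof} identifies it as $\BH(K)=C$.

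Under strict convexity, \ref{dm_2}$\Leftrightarrow$\ref{dm_4} rests on the minimal-representation results of Section~6. For \ref{dm_2}$\Rightarrow$\ref{dm_4}, Corollary~\ref{cor_necessary} gives $\BEXP(C)\subseteq\CL(K)=K$, and since $K$ is closed and convex, $\CL(\CONV(\BEXP(C)))\subseteq K$. For \ref{dm_4}$\Rightarrow$\ref{dm_2}, Theorem~\ref{thm_bh_bexp} provides $C=\BH(\BEXP(C))$; combining this with $\BEXP(C)\subseteq K\subseteq C$, Lemma~\ref{lem_basics}\ref{bh_inclusion} and \ref{bh_bhbh} give $C=\BH(\BEXP(C))\subseteq\BH(K)\subseteq\BH(C)=C$, so $\BH(K)=C$.

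The main obstacle I anticipate is exactly the diameter check inside \ref{dm_2}$\Rightarrow$\ref{dm_1}: without knowing \emph{a priori} that $\DIAM(K)=1$, Lemma~\ref{lem_complete}\ref{groemer} cannot be applied. Routing through the already-established \ref{dm_3} to extract a diametral pair within $K$ bypasses this cleanly; everything else is a direct assembly of Theorems~\ref{thm_meeting_F} and \ref{thm_bh_bexp} with Lemma~\ref{lem_complete}.
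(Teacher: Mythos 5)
Your proposal is correct and follows essentially the same route as the paper: establish that $C$ is a b-bounded b-convex body via Lemma~\ref{lem_complete}\ref{eggleston} and \eqref{(1a)}, get \ref{dm_2}$\Leftrightarrow$\ref{dm_3} from Theorem~\ref{thm_meeting_F}, \ref{dm_1}$\Rightarrow$\ref{dm_2} from Lemma~\ref{lem_complete}\ref{groemer_proof}, and close the cycle by using \ref{dm_3} together with Lemma~\ref{lem_complete}\ref{eggleston*} to produce a diametral pair inside $K$ before invoking Lemma~\ref{lem_complete}\ref{groemer}. The paper likewise proves (\ref{dm_2}$\wedge$\ref{dm_3})$\Rightarrow$\ref{dm_1} in exactly this way, and your handling of \ref{dm_4} is just a spelled-out version of the paper's one-line appeal to Theorem~\ref{thm_bh_bexp}.
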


\begin{proof}
First note that $C$ is b-bounded by (\ref{(1a)}), because $\DIAM(C)=1$, and Lemma~\ref{lem_complete}\ref{eggleston} shows that $C$ is a b-bounded b-convex body.

\ref{dm_1}$\Rightarrow$\ref{dm_2}: Since $C$ is a completion of $K$, we obtain $\DIAM(K)=\DIAM(C)=1$. Now Lemma~\ref{lem_complete}\ref{groemer_proof} gives \ref{dm_1}$\Rightarrow$\ref{dm_2}.

\ref{dm_2}$\Leftrightarrow$\ref{dm_3} and \ref{dm_2}$\Leftrightarrow$\ref{dm_4} follow from Theorems~\ref{thm_meeting_F} and \ref{thm_bh_bexp}, respectively.

(\ref{dm_2}$\wedge$\ref{dm_3})$\Rightarrow$\ref{dm_1}: By \ref{dm_3}, there exists $x_1 \in K \cap \BD(C)$. Lemma~\ref{lem_complete}\ref{eggleston*} gives $x'_2 \in \BD(C)$ such that $x'_2 \in S(x_1,1)$. Then $S(x_1,1)$ is a supporting sphere of $C$ and $F=C \cap S(x_1,1)$ is an exposed b-face of $C$. By condition \ref{dm_3}, there exists $x_2 \in K \cap F \subseteq K \cap S(x_1,1)$. We obtain $\DIAM(K)=1$, because
$$
1=\|x_1-x_2\| \le \DIAM(K) \le \DIAM(C)=1.
$$
Now Lemma~\ref{lem_complete}\ref{groemer} and \ref{groemer_proof} gives \ref{dm_2}$\Rightarrow$\ref{dm_1}, and we are done.
\end{proof}

Criteria \ref{dm_3} and \ref{dm_4} from Proposition~\ref{prop_dm} help to characterize minimal convex bodies $K_0$ in a complete set $C$ such that $C$ is the unique completion of $K_0$.

\begin{example}
We consider the space $l_\infty^n$ as in Example~\ref{ex_linfty}. The only complete sets in that space are closed balls (see \cite[Corollary 2]{eggleston1965}), so that a complete set $C$ of diameter $1$ is necessarily a box (i.e., a square if $n=2$) with edges of length $1$ parallel to the coordinate axes. The equivalence of \ref{dm_1} and \ref{dm_3} in Proposition~\ref{prop_dm} says that $C$ is the unique completion of a convex body $K \subseteq C$ if and only if $K$ meets each of the $2n$ facets of $C$. It is easy to find minimal convex bodies $K_0$ with that property: Such $K_0$ is the convex hull of a minimal set $S$ consisting of at least one point from each facet of $C$. If $n=2$, then such $K_0$ can be a line segment (a diagonal of $C$), a triangle or a quadrangle. For arbitrary $n \ge 2$,
the number of vertices of such $K_0$ can be $2$ (if $K_0$ is a diagonal of $C$ passing through the center of $C$), $3$, \ldots, $2n$ (e.g., if $K_0$ is the cross polytope generated by the centers of the $2n$ facets of $C$).
\end{example}

If the underlying Minkowski space is strictly convex, then Proposition~\ref{prop_dm} shows that the problem mentioned above has a unique solution.

\begin{corollary}
Let $C$ be a complete set of diameter $1$ in a strictly convex Minkowski space $(\mathbb{R}^n,\|\cdot\|)$. Then $K_0=\CL(\CONV(\BEXP(C)))$ is the unique minimal (under inclusion) convex body whose unique completion is $C$.
\end{corollary}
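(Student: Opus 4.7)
The plan is to deduce the corollary directly from Proposition~\ref{prop_dm} by reading off the equivalence (I)$\Leftrightarrow$(IV) as a statement about the poset of convex bodies $K$ whose unique completion is $C$, ordered by inclusion. The candidate $K_0=\CL(\CONV(\BEXP(C)))$ will turn out to be a lower bound for this poset, which is the crucial point.

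First, I would check that $K_0$ is a legitimate convex body contained in $C$. Since $\DIAM(C)=1$, inequality \eqref{(1a)} gives $\RAD(C)\le \frac{n}{n+1}<1$, so $C$ is a b-bounded b-convex body (the latter by Lemma~\ref{lem_complete}\ref{eggleston}). Because $\|\cdot\|$ is strictly convex, Theorem~\ref{thm_bh_bexp} applies to $C$ and yields $C=\BH(\BEXP(C))$; in particular $\BEXP(C)\neq\emptyset$, since otherwise the ball hull would be $\bR^n$. Then $\BEXP(C)\subseteq C$, combined with convexity and closedness of $C$, gives $K_0\subseteq C$, and $K_0$ is a non-empty compact convex set, i.e., $K_0\in\cK^n$.

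Next, I would show that $C$ is the unique completion of $K_0$: the inclusion $\CL(\CONV(\BEXP(C)))\subseteq K_0$ holds trivially, so condition \ref{dm_4} of Proposition~\ref{prop_dm} is satisfied with $K:=K_0$, hence \ref{dm_1} holds by the equivalence established there.

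Finally, for the minimality claim, I would take any convex body $K$ whose unique completion is $C$. Then $\DIAM(K)=\DIAM(C)=1$ and, by Lemma~\ref{lem_complete}\ref{groemer_proof}, $C=\BH(K)$, so $K\subseteq\BH(K)=C$, bringing us into the setting of Proposition~\ref{prop_dm}. The implication \ref{dm_1}$\Rightarrow$\ref{dm_4} then yields $K_0=\CL(\CONV(\BEXP(C)))\subseteq K$. Thus every convex body with unique completion $C$ contains $K_0$, which shows at once that $K_0$ is itself minimal and that it is the unique minimal such body (any other minimal $K$ would satisfy $K_0\subseteq K$, while minimality of $K$ combined with $K_0$ also being a convex body with unique completion $C$ would force $K=K_0$). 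There is no real obstacle here beyond bookkeeping; the substantive work has been done in Theorem~\ref{thm_bh_bexp} and Proposition~\ref{prop_dm}, and the corollary is essentially a reformulation of their content as a minimality statement.
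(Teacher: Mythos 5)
Your proof is correct and follows exactly the route the paper intends: the corollary is stated there without a separate proof, as an immediate consequence of the equivalence \ref{dm_1}$\Leftrightarrow$\ref{dm_4} in Proposition~\ref{prop_dm} (together with Lemma~\ref{lem_complete}\ref{groemer_proof} to reduce an arbitrary $K$ to the case $K\subseteq C$), and you have simply spelled out that reduction and the lower-bound argument. One negligible slip: with the paper's conventions $\BH(\emptyset)=\emptyset$ rather than $\bR^n$ (the index family of all unit balls is non-empty), but your conclusion $\BEXP(C)\neq\emptyset$ still follows since $C\neq\emptyset$ and $C=\BH(\BEXP(C))$.
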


%%%%%%%%%%%%%%%%%%%%%%%%%%%%%%%%%%%%%%%%%%%%%%%%%%%%%%%%%%%%%%%%%%%%%%%%%%%

%%%%%%%%%%%%%%%

\section{Open questions}

\subsection{Spindle convexity in Minkowski spaces}
In \cite{La-Na-Ta} $\BH(\{x_1,x_2\})$ is called the \emph{spindle} of $x_1,x_2 \in \mathbb{R}^n$, which generalizes the corresponding notion from Euclidean space (see, e.g.,\ \cite{bezdek_et_al_2007} and the references given there). A set $S \subseteq \mathbb{R}^n$ is called \emph{spindle convex} if, for all $x_1,x_2 \in S$, $S$ covers the whole spindle of $x_1$ and $x_2$. This gives rise to the concept of the \emph{spindle convex hull} of a subset of $\mathbb{R}^n$.
Note that spindle convex sets are not necessarily closed, in contrast to b-convex sets. Closed sets turn out to be spindle convex if and only if they are b-convex, provided the underlying Minkowski space is Euclidean or two-dimensional or its unit ball is (an affine image of) a cube (see \cite[Corollary~3.4]{bezdek_et_al_2007}, \cite[Corollaries 3.13 and 3.15]{La-Na-Ta}). An example in (an affine image of) the space $l_1^3$ from Example~\ref{ex-l1} shows that closed spindle convex sets need not be b-convex in general (see \cite[Example~3.1]{La-Na-Ta}).

We define a related hierarchy of notions of convexity by calling a set $S \subseteq \bR^n$ \emph{$k$-spindle convex}, $k \in \{2,3,\ldots\}$, if $\BH(\{x_1,\ldots,x_k\}) \subseteq S$ for all $x_1,\ldots,x_k \in S$. We call $S$ \emph{$\ast$-spindle convex} if $\BH(F) \subseteq S$ for every finite $F \subseteq S$ (i.e., if $S$ is $k$-spindle convex for all $k=2,3,\ldots$).

Are the $k$-spindle convex hulls and the $\ast$-spindle convex hull of a closed set closed? Clearly, every b-convex set is $\ast$-spindle convex. Is every closed $\ast$-spindle convex set b-convex? Theorem~\ref{theo2} says that in many situations $\BH(S)$ is the closure of the $\ast$-spindle convex hull of $S$. On the other hand, the relatively open segment $S$ from Example~\ref{ex-dim4} is $\ast$-spindle convex, but $\CL(S)$ is
not even $2$-spindle convex. Given an arbitrary Minkowski space $(\bR^n,\|\cdot\|)$, does there exist $k \in \{2,3,\ldots\}$ such that $\ast$-spindle convexity coincides with $k$-spindle convexity? Given $k \in \{2,3,\ldots\}$, does there exist a Minkowski space $(\bR^n,\|\cdot\|)$ such that $k$-spindle convexity differs from $(k+1)$-spindle convexity?
These and related questions might be studied to continue naturally our investigations here.

%%%%%%%%%%%%%%%%%%

\subsection{Generalized Minkowski spaces}

Our results are shown in the framework of a Minkowski space. What remains true if the norm is replaced by a gauge, i.e., if the unit ball is no longer necessarily centered at $o$?

\subsection{M\"obius geometry}

One might check whether there are interesting connections (e.g., regarding the used methods and tools) to M\"obius geometry where spheres also play somehow the role of hyperplanes;
see, e.g., \cite{Vo, L-T}.

%%%%%%%%%%%%%%%%%%%%%%%%%%%%%%%%%%%%%%%%%%%%%%%%%%%%%%%%%%%%%%%%%%%

%%%%%%%%%%%%%%%%%%%%%%%%%%%%%%%%%%%%%%%%%%%%%%%%%%%%%%%%%%%%%

\end{document}